\documentclass[a4paper,reqno]{amsart}
\usepackage{amsmath, amssymb, eucal, amscd, amstext, verbatim, enumerate, color}
\usepackage{microtype}
\usepackage{amssymb,latexsym}
\usepackage{mathtools}
\usepackage{amsfonts}
\usepackage{mathrsfs}
\usepackage{blindtext}
\usepackage{latexsym}
\usepackage[colorlinks]{hyperref}
\usepackage{graphicx}
\usepackage[all]{xy}
\usepackage{xcolor}
\usepackage{layout}
\usepackage{bbm}
\usepackage[pagewise]{lineno}

\theoremstyle{plain}
\newtheorem{thm}{Theorem}[section]
\newtheorem{lem}[thm]{Lemma}
\newtheorem{prop}[thm]{Proposition}
\newtheorem{cor}[thm]{Corollary}

\theoremstyle{definition}
\newtheorem{defn}[thm]{Definition}
\newtheorem{ex}[thm]{Example}

\newtheorem{rem}{Remark}
\numberwithin{equation}{section}

\newcommand{\BN}{\mathbb{N}}
\newcommand{\BC}{\mathbb{C}}
\newcommand{\CE}{\mathcal{E}}
\newcommand{\BZ}{\mathbb{Z}}
\newcommand{\A}{\mathcal{A}}
\newcommand{\B}{\mathcal{B}}
\newcommand{\C}{\mathcal{C}}
\newcommand{\D}{\mathcal{D}}
\newcommand{\G}{\Gamma}
\newcommand{\M}{\mathbb M}
\newcommand{\GC}{$\Gamma$-C$^*$-algebra}
\newcommand{\CH}{\mathcal{H}}
\newcommand\ssubset{
	\mathrel{
		\mathrlap{\subset}
		\hphantom{\ll}
		\mathllap{\subset}
	}
}
\begin{document}
\title{Equivariant (co)module nuclearity of C$^*$-crossed products}

\author{Massoud Amini, Qing Meng}

\address[Massoud Amini]{Faculty of Mathematical Sciences, Tarbiat Modares University, Tehran 14115134, Iran}
\email{mamini@modares.ac.ir}
\address[Qing Meng]{School of Mathematical Sciences, Qufu Normal University, Qufu, Shandong  province 273165, China}
\email{mengqing80@163.com}

\keywords{nuclearity; C$^*$-module; reduced crossed product}

\subjclass[2010]{Primary 46L05, Secondary 46L55}

\date{\today}
\maketitle

\begin{abstract}
We define an equivariant and equicovariant versions of the notion of module nuclearity. More precisely, for a discrete group $\G$ and operator $\A$-$\G$-(co)module $\B$, $\CE$ over a $\G$-C$^*$-algebra $\A$, we define $\CE$-$\G$-nuclearity of $\B$, as an equivariant version of the notion of $\CE$-nuclearity, in which the identity map on $\B$ is required to be approximately factored through matrix algebras on $\CE$ with module structures coming both from the original module structure of $\CE$ and the $\G$-action on $\CE$. For trivial actions of $\G$, this is shown to reduce to the notion of module nuclearity, introduced and studied by the first author. As a concrete example, for a discrete group $\G$ acting amenably on a unital C$^*$-algebra $\A$, we show that the reduced crossed product $\A\rtimes_{r} \G$ is $\A$-$\G$-nuclear. Conversely, if $\A$ is a nuclear C$^*$-algebra with a $\G$-invariant state $\rho$ and $\A\rtimes_{r} \G$ is $\A$-$\G$-nuclear, then we deduce that $\G$ is amenable. We show that when $\A\rtimes_{r} \G$ is $\A$-$\G$-nuclear and $\A$ has the completely bounded approximation property (resp., is exact), then so is $\A\rtimes_{r} \G$. We prove similar results for $\A\rtimes_{r} \G$, regarded as an $\A$-$\G$-comodule.
\end{abstract}

\section{Introduction}

Approximation theory plays a central role in the modern theory of operator algebras. There are many significant approximation properties that are defined and studied for C$^*$-algebras, including nuclearity, quasidiagonality, completely bounded approximation property (CBAP), Haagerup property, and weak Haagerup property (see \cite{BO, Dong, Haag, Meng1}).

The theory of nuclear C$^*$-algebras has developed for a quite while, and is closely related to the amenability of groups (see \cite{BO, Kirc, Lan}). It is natural to try to extend nuclearity to a more general setting.
The first author \cite{Ami} introduced the notion of nuclear module morphisms for a C$^*$-algebra $\B$ which is C$^*$-module over another C$^*$-algebra $\A$ (or more generally, over an operator system) with compatible actions, and used it to define nuclearity and exactness in this subcategory of C$^*$-modules.

In this paper, we discuss the nuclearity of C$^*$-crossed products, considered as C$^*$-modules on the ambient C$^*$-algebra. It turns out that this case does not fit into the module nuclearity context of \cite{Ami}, as $\A$ is not just a C$^*$-algebra, but a \GC. This suggests that an equivariant version of module amenability is needed, and this is the main objective of the current paper. We show that for a discrete group $\G$ acting amenably on a unital C$^*$-algebra $\A$,  the reduced crossed product $\A\rtimes_{r} \G$ is $\A$-$\G$-nuclear. We find applications for other finite approximation properties such as exactness and completely bounded approximation property.

\section{Equivariant (Co)module Nuclearity}
Let $\G$ be a discrete group acting by automorphisms on a C$^*$-algebra $\A$. $\A^+$ is the positive cone of $\A$, $\A^*$ is the dual space of $\A$, $\A^*_+$ is the set of all positive linear functional on $\A$.
Let $\B, \D$ be operator right $\A$-modules and $\CE$ be an operator system and operator right $\A$-module. The first author defined in \cite{Ami} the $\CE$-nuclearity for a module map $\theta: \B \to \D$  by requiring that $\theta$ is approximately decomposed through $\M_n(\CE)$ over any given finite subset of $\B$. As it would be too restrictive to
assume that the maps implementing the approximate decomposition are module maps, a class of admissible maps was defined to make the decomposition more likely to happen.

The class of admissible maps is defined as follows. To simplify notations, we use c.p. for ``completely positive'', u.c.p. for ``unital completely positive'', c.c.p. for ``contractive completely positive'', c.i. for ``complete isometry'', and c.b. for ``completely bounded''.

\begin{defn} \cite[Definition 2.1]{Ami}\label{def-admissible1}
The class of $\A$-admissible c.p. maps between operator $\A$-modules $\B$ and $\D$ is characterized through the following set of rules:
\begin{enumerate}
	\item a c.p. module map $\varphi: \B\rightarrow \D$ is $\A$-admissible,
	
	\item if a c.p. map $\varphi: \B\rightarrow \D$ is $\A$-admissible, then so are the maps $x\mapsto \varphi(bxb^*)$ and $x\mapsto d\varphi(x)d^*$, for each $b\in \B$ and $d \in \D$,
	
	\item the c.p. maps $\varphi: \B\rightarrow \D$ of the form $\varphi(x) = \rho(x)d$, where $\rho \in \A^*_+$ and $d \in \D^+$, are $\A$-admissible,

    \item the composition, positive multiples, or finite sums of $\A$-admissible c.p. maps are $\A$-admissible.
\end{enumerate}
\end{defn}

The notion of nuclear module maps was then defined as follows.

\begin{defn} \cite[Definition 2.2]{Ami} \label{e-nuc}
A c.p. module map $\theta: \B \rightarrow \D$ is called $\CE$-nuclear if there are c.c.p. $\A$-admissible maps $\varphi_n: \B\rightarrow \M_{k(n)}(\CE)$ and $\psi_n: \M_{k(n)}(\CE)\rightarrow \D$ such that
$$\|\psi_n\circ \varphi_n(b)-b\|\rightarrow 0$$
for all $b\in \B$.
\end{defn}

A C$^*$-algebra $\B$ is called a right $\A$-C$^*$-module if $\B$ is a  Banach right $\A$-module with compatibility conditions,
$$(ab)\cdot u = a(b\cdot u), \qquad a \cdot uv = (a \cdot u) \cdot v,$$
for all $a, b \in \B$ and $u,v \in \A$.
Let $\B$ be a right $\A$-C$^*$-module, then $\B$ is called $\A$-nuclear if the identity map on $\B$ is $\A$-nuclear.

In this paper we are interested in $\A$-nuclearity of the reduced crossed product $A\rtimes_r\G$. Though there is a natural way to decompose the identity map via matrix algebras over $\A$, these matrix algebras have canonical module structures coming from the $\G$-action $\alpha$. For a finite subset $F\subseteq \G$ with cardinality $n$, a typical element of $\M_n(\A)$ is a finite sum of matrices of the form $a\otimes e_{s,t}$, for $s,t\in F$, where $e_{s,t}$ is the standard matrix unit. The module structure on $\M_n(\A)$ is now defined by $(a\otimes e_{s,t})\cdot b:=a\alpha_{t^{-1}}(b)\otimes e_{s,t}$. This module structure depends not only on $n$ but also on the choice of $F$, and so the canonical decomposition of the identity map on $\A\rtimes_r \G$ via such matrix algebras over $\A$ could not be used to verify the $\A$-nuclearity of $\A\rtimes_r \G$ in the usual module nuclearity context of \cite{Ami}.

The above difficulty could be handled using the fact that $\A$ is indeed a $\G$-C$^*$-algebra, and incorporating this fact into the definition of the module structure of the matrix algebras involved. To be more precise, we need to modify our notion not only by using $\G$-action in defining the module structure but also by replacing the usual full matrix algebras $\M_n(\A)$ by matrix algebras of the form $\M_{F}(\A)$, for finite subsets $F$ of the union of all finite products of copies of $\G$.

Consider the $n$-fold Cartesian products $\G^n$ for $n\geq 1$, and put $\G^\infty:=\bigcup_{n\geq 1} \G^n$.  Given $t=(t_1,\cdots, t_\ell)\in \G^\infty$, we use the formal notations $t^{-1}:=(t_\ell^{-1},\cdots, t_1^{-1})\in\G^\infty$  and $\bar t:=t_1\cdots t_\ell\in \G$. Also, for $r\in\G$, we define $rt:=(rt_1, t_2,\cdots, t_\ell)$, and note that $\overline{t^{-1}}=\bar t^{-1}$ and $\overline{rt}=r\bar{t}$.

For a finite subset $F\subseteq \G^\infty$, let $\M_F(\A)$ be the full matrix algebra of square matrices of size $|F|\times|F|$ with components in $\A$. We define the $\A$-module structure of $\M_F(\A)$ as follows. Each element of $\M_F(\A)$ is a finite sum of elements of the form $a\otimes e_{s,t}$, where $e_{s,t}$ is the matrix unit indexed by $s,t\in F$. Define the right module action of $\A$ on $\M_F(\A)$ by $(a\otimes e_{s,t})\cdot b:=a\alpha_{\bar t}^{-1}(b)\otimes e_{s,t}$, for $a,b\in \A$ and $s,t\in F$. Note that we have chosen to work with finite subsets of $\G^\infty$, not just finite subsets of $\G$, since we need to make sure we get full matrix algebras of any large size, even when $\G$ is finite (just imagine the case where $\G$ is the trivial group, whereas $\G^\infty$ is countable).

If $\CE$ is an operator space (resp., system) and operator right $\A$-module and $\G$ acts $\CE$ by (resp., unital) surjective complete isometries, then the right $\A$-module structure of $\M_F(\CE)$ is defined by
$(x\otimes e_{s,t})\cdot b:=x\cdot \alpha_{\bar t}^{-1}(b)\otimes e_{s,t}$, for $a\in \A$, $x\in\CE$ and $s,t\in F$, with the understanding that $\alpha_{\bar t}$ is no longer an automorphism, and just is a (resp., unital) surjective c.i. map. We refer the reader for min and max tensor products of operator systems to \cite{kptt}. In this paper, $\otimes$ always denotes the min tensor product. 

\begin{defn}
	An operator $\A$-$\G$-module $\B$ is an operator system with a $\G$-action $\alpha$ by complete isometries (i.e., each $\beta_t: \B\to \B$ is a surjective c.i. map and $\beta_t\beta_s=\beta_{ts}$, for $t,s\in\G$), and a right $\A$-module action, which is compatible with the the $\G$-action $\alpha$ on $\A$ (i.e., $\beta_t(b\cdot a)=\beta_t(b)\cdot\alpha_t(a)$, for $t\in\G, a\in\A, b\in\B$). When $\B$ is a C*-algebra, we  moreover assume that, $\beta_t(bc)=\beta_t(b)\beta_t(c)$, for $t\in\G, b,c\in\B$, i.e., we let $\G$ act on $\B$ by automorphisms.
\end{defn}

\begin{defn}
	An operator reduced $\A$-$\G$-comodule $\B$ is an operator system with a reduced $\G$-coaction $\delta_\B$  (i.e., $\delta_\B: \B\to \B\otimes C^*_r(\G)$ is a non-degenerate  c.p. map), a reduced $\G$-coaction $\delta_\A$  (i.e., $\delta_\A: \A\to \A\otimes C^*_r(\G)$ is a non-degenerate homomorphism), and a right $\A$-module structure which is compatible with the $\G$-coactions (i.e., $\delta_\B(b\cdot a)=\delta_\B(b)\cdot\delta_\A(a)$, for $a\in\A, b\in\B$). When $\B$ is a C*-algebra, we further assume  that $\delta_\B$ is a homomorphism. The operator full $\A$-$\G$-comodules are defined similarly using full $\G$-coactions $\delta_\B: \B\to \B\otimes C^*(\G)$.
\end{defn}

Note that in the above definition, following \cite{q}, in both cases we use the minimal tensor products (for a version which uses maximal tensor products, see \cite{r}).

\begin{defn}
	Let $\B$ and $\D$ be operator $\A$-$\G$-modules with $\G$-actions $\alpha^\B$ and $\alpha^\D$. A map $\theta: \B \rightarrow \D$ is called a (module) $\G$-map if ($\theta$ is an $\A$-module map and) we have, $$\theta(\alpha^\B_s (b))=\alpha^\D_s(\theta(b)) \ \ \ (b\in\B, s\in\G).$$
Similarly, when $\B$ and $\D$ are operator $\A$-$\G$-comodules with full $\G$-coactions $\delta_\B:\B\to\B\otimes C^*(\G)$  and $\delta_\D:\D\to\D\otimes C^*(\G)$, a c.b. map $\theta: \B \rightarrow \D$ is called a (module) $\G$-comap if ($\theta$ is an $\A$-module map and) we have, $$\delta_\D\theta(b)=(\theta\otimes {\rm id})\delta_\B(b) \ \ \ (b\in\B).$$
The same notion is defined for reduced $\G$-coactions with $C^*(\G)$ replaced by $C^*_r(\G)$.   				
\end{defn}

\begin{defn}\label{family}
Let $\B$ and $\D$ be operator $\G$-modules with $\G$-actions $\alpha^\B$ and $\alpha^\D$.

$(i)$ A $\G$-family of operator subsystems of $\B$ is a family $\{\B_i\}_{i\in I}$ endowed with an action of $\G$ on $I$ (as a discrete space) such that  $\alpha^\B_t(\B_i)\subseteq \B_{t\cdot i}$, for $i\in I, t\in\G$.

$(ii)$ A $\G$-family of maps on a $\G$-family $\{\B_i\}_{i\in I}$ of operator subsystems of $\B$, is a family $\{\psi_i: \B_i\to \D\}_{i\in I}$ of c.p. maps such that
	$\psi_{t\cdot i}(\alpha^\B_t(b))=\alpha^\D_t\big(\psi_i(b)\big)$, for $i\in I, b\in \B_i, t\in\G$.
\end{defn}

\begin{defn}\label{cofamily}
	Let $\B$ and $\D$ be operator $\G$-comodules with $\G$-coactions $\delta_\B$ and $\delta_\D$.
	
	$(i)$ A $\G$-cofamily of operator subsystems of $\B$ is a family $\{\B_i\}_{i\in I}$ endowed with a map $\delta: I\to  I$ such that $\delta_\B(\B_i)\subseteq \B_{\delta(i)}\otimes C^*_r(\G)$, for $i\in I$.
	
	$(ii)$ A $\G$-cofamily of maps on a $\G$-family $\{\B_i\}_{i\in I}$ of operator subsystems of $\B$, is a family $\{\psi_i: \B_i\to \D\}_{i\in I}$ of c.p. maps such that $$(\psi_{\delta(i)}\otimes{\rm id}_{C^*_r(\G)})(\delta_\B(b))=\delta_\D\big(\psi_i(b)\big),\ \  (i\in I, b\in \B_i).$$
\end{defn}

The point of the above definitions is that the subsystems $\B_i$ are not assumed to be $\G$-(co)submodules of $\B$. When this is the case, we may let $\G$ act trivially on $I$ (let $\delta:={\rm id}_I$) and have a $\G$-(co)family consisting of $\G$-(co)maps.

\begin{defn}\label{def-admissible}
	The class of $\G$-admissible families of c.p. maps is characterized through the following set of rules:
	\begin{enumerate}
		\item[($i$)] a $\G$-family is $\G$-admissible,
		\item[($ii$)] if a family $\{\psi_i: \B_i\to \D\}_{i\in I}$ is $\G$-admissible, it remains $\G$-admissible if one of the following maps is added to the family:

		\noindent\hspace{.8cm} (1) $x\mapsto \psi_i(bxb^*)$ or  $x\mapsto d\psi_i(x)d^*$, for some $b\in \B$ or $d \in \D$,
		
		\noindent\hspace{.8cm} (2) a finite sum $\psi_{i_1}+\cdots\psi_{i_k}$ of the maps in the same family,
		
		\noindent\hspace{.8cm} (3) $x\mapsto \alpha^\D_t(\psi_i(x))$ or  $x\mapsto \psi_{t\cdot i}(\alpha^\B_t(x))$, for some $i\in I$ and $t\in\G$,
		
		\item[($iii$)] for $\G$-admissible families $\{\phi_i: \B_i\to \D\}_{i\in I}$ and $\{\psi_j: \D_j\to \mathcal C\}_{j\in J}$, the family consisting of compositions $\psi_j\circ\phi_i: \B_i\to\mathcal C$, for indices with $\phi_i(\B_i)\subseteq \D_j$,  is $\G$-admissible,
		
		\item[($iv$)] for $\G$-admissible families $\{\phi_i: \B_i\to \D\}_{i\in I}$ and $\{\psi_j: \B_j\to \D\}_{j\in J}$, their union is $\G$-admissible.
		\end{enumerate}
Similarly, the class of $\G$-coadmissible c.p. maps is characterized through the same set of rules, with admissible replaced by coadmissible,  $(i)$ and part (3) in $(ii)$ replaced by:
	
	\begin{enumerate}
		\item[($i$)$^{'}$] a $\G$-cofamily is $\G$-coadmissible,
		
		\item[(3)$^{'}$] $x\mapsto ({\rm id}_\D\otimes\varphi)\delta_\D\big(\psi_i(x)\big)$ or  $x\mapsto \psi_{\delta(i)}\big(({\rm id}_{\B_{\delta(i)}}\otimes\varphi)\delta_\B(x)\big)$, for some $i\in I$ and $\varphi\in C^*_r(\G)^*$.
	\end{enumerate}

\noindent When $\B$ and $\D$ are operator  $\A$-$\G$-(co)modules, a (co)family $\{\psi_i: \B_i\to \D\}_{i\in I}$ of c.p. maps is called (co)admissible if the (co)family is $\G$-(co)admissible, and all the maps $\psi_i$ are $\A$-admissible.
\end{defn}

We use the notation $F\ssubset X$ to say that $F$ is a finite subset of $X$.

\begin{defn} \label{e-g-nuc}
Let $\B$ and $\D$ be operator $\A$-$\G$-(co)modules and $\CE$ be an operator system with unit $1_\CE$ such that $\CE\otimes\mathbb B(\ell^2(\G^\infty))$ has an $\A$-$\G$-(co)module structure. Given $F\ssubset \G^\infty$, let $P_F: \ell^2(\G^\infty)\twoheadrightarrow
\ell^2(F)$ be the corresponding orthogonal projection. A c.c.p. $\A$-module and $\G$-(co)map $\theta: \B \rightarrow \D$ is called $\CE$-$\G$-nuclear, as a map between $\A$-$\G$-(co)modules, if for each $\varepsilon>0$ and $S\ssubset \B$, there is a (co)admissible c.c.p. map $\varphi: \B\rightarrow \CE\otimes\mathbb B(\ell^2(\G^\infty))$ and (co)admissible family $\{\psi_F: \M_F(\CE)\rightarrow \D\}_{F\ssubset \G^\infty}$ of c.c.p. maps such that,
$$\|\psi_F \varphi_F(b)-b\|\leq\varepsilon \ \ (b\in S),$$
where the c.c.p. (co)admissible map $\varphi_F: \B\rightarrow \M_F(\CE)$ is defined by
$$\varphi_F(b):=(1_\CE\otimes P_F)\varphi(b)(1_\CE\otimes P_F),$$ for $b\in\B$.
We say that $\B$ is $\CE$-$\G$-nuclear as an $\A$-$\G$-(co)module, if ${\rm id}_\B$ is $\CE$-$\G$-nuclear as an $\A$-module and $\G$-(co)map.
\end{defn}

\begin{rem}
$(i)$
Note that in the above definition, we have incorporated the $\G$-action not only in the definition of the class of $\G$-admissible maps, but also in the module structure of $\M_F(\CE)$. Also, each $\mathbb M_F(\CE)$ (and in particular, $\CE$ itself) is getting its $\A$-module structure as a subspace from  $ \CE\otimes\mathbb B(\ell^2(\G^\infty))$, but it is not necessarily a $\G$-(co)submodule of  $ \CE\otimes\mathbb B(\ell^2(\G^\infty))$. The latter fact explains why we do not directly require existence of
c.c.p. (co)admissible maps $\varphi_F: \B\rightarrow \M_F(\CE)$ and $\psi_F: \M_F(\CE)\rightarrow \D$ such that
$$\|\psi_F \varphi_F(b)-b\|\leq\varepsilon \ \ (b\in S).$$
When, each $\mathbb M_F(\CE)$ happen to be a $\G$-(co)submodule of  $ \CE\otimes\mathbb B(\ell^2(\G^\infty))$, these definitions are equivalent.

$(ii)$ When $\G$ is the trivial group, or more generally, when $\G$ (co)acts trivially on all the $\mathcal A$-modules involved, Definitions \ref{e-nuc} and \ref{e-g-nuc} are equivalent. Indeed, in this case, every family of c.p. maps is automatically a $\G$-(co)family. When  $\G$ is trivial and $\A=\mathbb C$, these definitions are equivalent to the definition of nuclearity for operator systems \cite[page 506]{eor}
\end{rem}

As in \cite[Lemma 2.3]{Ami}, we have the following result.

\begin{lem} \label{independence}
	Let $\B$ and $\D$ be operator $\A$-$\G$-(co)modules and $\CE$ be an operator system such that $ \CE\otimes\mathbb B(\ell^2(\G^\infty))$ has an $\A$-$\G$-(co)module structure. Let $\theta: \B \rightarrow \D$ be a c.c.p. $\A$-module and $\G$-(co)map which is $\CE$-$\G$-nuclear, as a map between $\A$-$\G$-(co)modules.
	
	$(i)$ $($restriction$)$ If $\C\subseteq \B$ is an $\A$-$\G$-sub(co)modules, then the restriction map $\theta|_\C: \C\to \D$ is $\CE$-$\G$-nuclear, as a map between $\A$-$\G$-(co)modules.
	
	$(ii)$ $($dependence on range$)$ If $\C\subseteq \D$ is an $\A$-$\G$-sub(co)modules with $\theta(\B)\subseteq \C$, then under any of the following conditions, $\theta: \B\to \C$  is $\CE$-$\G$-nuclear, as a map between $\A$-$\G$-(co)modules:
	
	\hspace{.3cm} $(1)$ there is a conditional expectation $\mathbb E: \D\to \C$,
	
	\hspace{.3cm} $(2)$ there is a sequence of c.c.p. $\A$-$\G$-(co)admissible maps $\mathbb E_n: \D\to \C$ such that $\mathbb E_n\to id_\C$ on $\C$ in the point-norm topology.
	
	$(ii)$ $($composition$)$ If $\C$ is an $\A$-$\G$-(co)module and $\sigma: \D\to \C$ and $\tau: \C\to \B$ are c.c.p. $\A$-module and $\G$-(co)maps, then the compositions $\sigma\circ\theta$ and $\theta\circ \tau$ are $\CE$-$\G$-nuclear, as maps between $\A$-$\G$-(co)modules.
\end{lem}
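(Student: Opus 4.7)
The unifying idea is that a single c.c.p.\ $\A$-module and $\G$-(co)map constitutes a (co)admissible singleton family (by rule $(i)$ or $(i)'$, together with Definition \ref{def-admissible1}$(1)$), and rule $(iii)$ then lets us compose such a map on either side of an existing (co)admissible map or family. In each part, the plan is to start from the approximate factorization $\psi_F\circ\varphi_F\approx\theta$ furnished by the $\CE$-$\G$-nuclearity of $\theta$, and pre- or post-compose with the naturally occurring c.c.p.\ module $\G$-(co)map; contractivity then preserves the $\varepsilon$-estimate. For part $(i)$, given $S\ssubset\C$ and $\varepsilon>0$, I apply the nuclearity of $\theta$ to $S\ssubset\B$ to obtain $\varphi$ and $\{\psi_F\}$. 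The inclusion $\iota\colon\C\hookrightarrow\B$ is a c.c.p.\ $\A$-module $\G$-(co)map, so $\varphi\circ\iota=\varphi|_\C\colon\C\to\CE\otimes\mathbb B(\ell^2(\G^\infty))$ is (co)admissible by rule $(iii)$, while the family $\{\psi_F\}$ is unchanged, and the required inequality is inherited verbatim.

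For part $(ii)$ case $(1)$, I post-compose $\{\psi_F\}$ with the conditional expectation $\mathbb E\colon\D\to\C$ (which, being a c.c.p.\ $\A$-module and $\G$-(co)map, constitutes a singleton (co)admissible family) to obtain $\{\mathbb E\circ\psi_F\}$ into $\C$. Since $\mathbb E$ fixes $\theta(\B)\subseteq\C$ and is contractive, $\|\mathbb E\psi_F\varphi_F(b)-\theta(b)\|\leq\|\psi_F\varphi_F(b)-\theta(b)\|\leq\varepsilon$. In case $(2)$, given $S\ssubset\B$ and $\varepsilon>0$, I first choose $n$ large enough that $\|\mathbb E_n\theta(b)-\theta(b)\|<\varepsilon/2$ for every $b\in S$, then apply the nuclearity of $\theta$ with tolerance $\varepsilon/2$, and combine via the triangle inequality $\|\mathbb E_n\psi_F\varphi_F(b)-\theta(b)\|\leq\|\mathbb E_n(\psi_F\varphi_F(b)-\theta(b))\|+\|\mathbb E_n\theta(b)-\theta(b)\|\leq\varepsilon$. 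The family $\{\mathbb E_n\circ\psi_F\}$ is (co)admissible by rule $(iii)$, since $\mathbb E_n$ is (co)admissible by hypothesis.

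For part $(iii)$, if $\sigma\colon\D\to\C$ is a c.c.p.\ $\A$-module $\G$-(co)map, I post-compose $\{\psi_F\}$ by $\sigma$ and keep $\varphi$ unchanged; contractivity of $\sigma$ gives the estimate and rule $(iii)$ gives (co)admissibility. If $\tau\colon\C\to\B$ is given, I apply the nuclearity of $\theta$ to the finite set $\tau(S)\ssubset\B$ and use $\varphi\circ\tau\colon\C\to\CE\otimes\mathbb B(\ell^2(\G^\infty))$ (again (co)admissible by rule $(iii)$) together with $\{\psi_F\}$. The main obstacle throughout is purely bookkeeping: checking that $\G$-(co)family compatibility really does survive each invocation of rule $(iii)$, especially in the comodule setting, where one must unpack the intertwining identity $\delta_\D\circ\sigma=(\sigma\otimes\mathrm{id})\delta_\C$ (and its analogue for $\tau$ and for $\mathbb E$, $\mathbb E_n$, $\iota$) to confirm that each of these really is a singleton $\G$-cofamily. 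These verifications are mechanical but tedious, and constitute the only technically non-trivial ingredient in the proof.
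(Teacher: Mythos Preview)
The paper does not supply its own proof of this lemma; it merely remarks that the argument proceeds ``as in \cite[Lemma 2.3]{Ami}'' and states the result without further justification. Your approach---pre- or post-composing the given approximate factorization with the relevant c.c.p.\ $\A$-module $\G$-(co)map and invoking rule~$(iii)$ of Definition~\ref{def-admissible}---is exactly the natural one, and parts $(i)$, $(ii)(2)$, and the composition part go through as you describe.

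There is, however, one unjustified assertion in part $(ii)(1)$. You claim that the conditional expectation $\mathbb E\colon\D\to\C$ is ``a c.c.p.\ $\A$-module and $\G$-(co)map,'' but the hypothesis says only that $\mathbb E$ is a conditional expectation. Complete positivity and contractivity are automatic, and the $\C$-bimodule property follows from Tomiyama's theorem, but nothing forces $\mathbb E$ to intertwine the external $\A$-module action or the $\G$-(co)action on $\D$. Without equivariance, the singleton $\{\mathbb E\}$ is not a $\G$-(co)family in the sense of Definition~\ref{family} or~\ref{cofamily}, so rule~$(iii)$ does not apply and you cannot conclude that $\{\mathbb E\circ\psi_F\}$ is (co)admissible. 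This is most likely an implicit hypothesis the authors intend---condition $(1)$ should presumably read ``an $\A$-module, $\G$-(co)equivariant conditional expectation,'' which would make it a genuine strengthening of condition $(2)$---but as the lemma is stated it is a gap, and you should flag the missing assumption rather than silently assume it.
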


\section{Main Results}

In this section, $\G$ is a discrete group with identity element $e$, that acts on a unital C$^*$-algebra $\A$ with unit $1$, through an action $\alpha$. We identify $\M_n(\A)$ with $\A\otimes \M_n(\BC)$. Let $C_c(\G,\A)$ be the linear space of finitely supported functions on $\Gamma$ with values in $\A$. We denote elements of $C_c(\G,\A)$ by formal finite sums $\sum_{s\in\G} a_ss$, where $a_s\in\A$ is zero except for finitely many $s\in\G$. This should be interpreted as a function: $\G\to\A; s\mapsto a_s$. We denote the reduced crossed product of the C$^*$-dynamical system $(\A,\G,\alpha)$ by $\A\rtimes_{\alpha,r} \G$, or simply by $\A\rtimes_{r} \G$ when the action is known from the context, and identify $\A\subseteq \A\rtimes_{r} \G$ as well as $\G\subseteq \A\rtimes_{r} \G$ via canonical embeddings. We frequently use the facts that $\{as: a\in\A, s\in\G\}$ is a total set in $\A\rtimes_r\G$ and there is a canonical faithful conditional expectation $\mathbb E:\A\rtimes_{\alpha,r} \G \rightarrow \A$, sending $\sum_{s\in\G} a_ss$ to $a_e$ (see \cite{BO} for details).

Let $Z(\A)$ be the center of $\A$ with positive cone  $Z(\A)^+$. Let us  recall the notion of amenable actions on C$^*$-algebras  (cf., \cite[Definition 4.3.1]{BO}).

\begin{defn}\label{am}
An action $\alpha$ of a discrete group $\G$ on a unital C$^*$-algebra $\A$ is amenable if there exists a net $\{T_i\}_{i\in \BN}$ of finitely supported functions $T_i: \G\rightarrow Z(\A)^+$ such
that $\sum\limits_{s\in \G}T_i(s)^2=1$ and for each $t\in \G$,
$$\sum_{s\in \G}\big(T_i(s)-\alpha_t(T_i(t^{-1}s))\big)^*\big(T_i(s)-\alpha_t(T_i(t^{-1}s))\big)\rightarrow 0,$$
in norm, as $i\to\infty$.

\end{defn}

Since $\A$ is a C$^*$-subalgebra of $\A\rtimes_{r} \G$, the reduced crossed product $\A\rtimes_{r} \G$ could be thought as a C$^*$-module with module action given by the right multiplication map.  Hence, the right $\A$-module structure on $\A\rtimes_r\G$ is
$$as\cdot b:=asb=a\alpha_s(b)s, \ \ (a,b\in \A, s\in\G).$$

For a faithful representation $\A\hookrightarrow\mathbb B(\mathcal H)$, we define a new representation of $\A$ on $\mathcal H \otimes \ell^2(\G^\infty)$ by
$$\pi(a)(\xi\otimes \delta_s)=(\alpha_{\bar s^{-1}}(a)\xi)\otimes \delta_s, \ (a\in \A, s\in \G^\infty).$$
where with the notations of previous section, $\bar t:=t_1\cdots t_\ell\in\G$, for $t=(t_1,\cdots, t_\ell)\in\G^\infty$.
In fact, we have that $$\pi(a)=\sum_{s\in \G^\infty}\alpha_{\bar s^{-1}}(a)\otimes e_{s,s}, \ (a\in \A),$$
where $e_{s,t}$ are the matrix units of $\mathbb B(\ell^2(\G^\infty))$.
Also, compatible with what defined in the previous section, we define the right action of $\A$ on $\A\otimes \mathbb B(\ell^2(\G^\infty))$ by
$$(a\otimes e_{s,t})\cdot b:=(a\otimes e_{s,t})\pi(b)=a\alpha_{\bar t^{-1}}(b)\otimes e_{s,t}, \ (a,b\in \A, s,t\in \G^\infty).$$
These extend to actions on $\B:=\A\rtimes_r\G$ and $\D:=\A\otimes\mathbb B(\ell^2(\G^\infty))$, respectively.

Let $U$ be the unitary representation of $\G$ on $B(\ell^2(\G^\infty))$ such that
$$U_r(\delta_s)=\delta_{rs}, \ (r\in \G, s\in \G^\infty).$$
In fact, we have that $$U_r=\sum_{s\in \G^\infty}e_{rs,s},$$
for all $r\in \G$.
Next, we define the $\G$-actions on these $\A$-modules by
$$r\cdot(as):=rasr^{-1}=\alpha_r(a)rsr^{-1},  \ \ (a\in \A, s,r\in\G).$$
and
$$r\cdot(a\otimes e_{s,t}):=U_r(a\otimes e_{s,t})U^*_r=a \otimes e_{rs,rt}, \ (a\in \A, r\in\G, s,t\in \G^\infty).$$

It is easy to see that these maps are homomorphism.
Let us next observe that these are $\G$-actions compatible with the module structure in both cases. For  $a,b\in \A$ and $s,r\in \G$,
\begin{align*}
	r\cdot (as\cdot b)&=r\cdot(a\alpha_s(b)s)=\alpha_r(a)\alpha_{rs}(b)rsr^{-1}\\
	&=\alpha_r(a)\alpha_{rsr^{-1}}(\alpha_r(b))rsr^{-1}=\big(r\cdot (as)\big)\cdot \alpha_r(b).
\end{align*}

Similarly, for  $a,b\in \A$ and $s,t\in \G^\infty, r\in \G$,
\begin{align*}
	r\cdot \big((a\otimes e_{s,t})\cdot b\big)&=r\cdot (a\alpha_{\bar t^{-1}}(b)\otimes e_{s,t}) =a\alpha_{\bar t^{-1}}(b)\otimes e_{rs,rt}\\
	&=a\alpha_{\bar t^{-1}r^{-1}}(\alpha_r(b))\otimes e_{rs,rt}=\big(r\cdot (a\otimes e_{s,t})\big)\cdot \alpha_r(b).
\end{align*}
These extend to  well defined compatible $\A$-module actions and $\G$-actions on $\A\rtimes_r\G$, and $\A\otimes \mathbb B(\ell^2(\G^\infty))$, turning them into operator $\A$-$\G$-modules.

\begin{thm}\label{thm:act-am-a-nu}
	If the action $\alpha$ is amenable, then the reduced C$^*$-crossed product $\A\rtimes_{r} \G$ is $\A$-$\G$-nuclear as an $\A$-$\G$-module.
\end{thm}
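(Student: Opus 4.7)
\smnoind\textbf{Proof sketch.}
The plan is to combine the covariant representation $(\pi,U)$ built in the section with the amenability net $\{T_i\}$ from Definition~\ref{am} to write the identity of $\A\rtimes_r\G$ as a point--norm limit of compositions through the matrix algebras $\M_F(\A)$.

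Take the forward map to be the canonical embedding $\varphi=\rho:\A\rtimes_r\G\to\A\otimes\mathbb B(\ell^{2}(\G^{\infty}))$ determined by $\rho|_{\A}=\pi$ and $\rho(g)=U_{g}$ for $g\in\G$; covariance $U_{t}\pi(a)U_{t}^{*}=\pi(\alpha_{t}(a))$ is a direct check, and $\rho$ descends to $\A\rtimes_{r}\G$ because its restriction to $\CH\otimes\ell^{2}(\G)\subset\CH\otimes\ell^{2}(\G^{\infty})$ is a faithful copy of the left regular representation. Since right multiplication on $\A\otimes\mathbb B(\ell^{2}(\G^{\infty}))$ was defined as $X\cdot b=X\pi(b)$ and the $\G$-action there is conjugation by $U_{t}$, the $*$-homomorphism $\rho$ is automatically a unital c.p. $\A$-module map and a $\G$-map, hence admissible.

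For the backward family, fix $i$ and write $T=T_{i}$ with support $F^{T}\subseteq\G\subset\G^{\infty}$; after enlarging $F^{T}$ slightly if necessary so that it has trivial stabilizer under left translation, define
\[
\psi_{F^{T}}(a\otimes e_{s,r}):=T(s)\,s\,a\,r^{-1}\,T(r)\qquad(a\in\A,\ s,r\in F^{T}).
\]
This equals $V^{*}(\,\cdot\,)V$ for the column $V=(s^{-1}T(s))_{s\in F^{T}}$ over $\A\rtimes_{r}\G$, so $V^{*}V=\sum_{s}T(s)^{2}=1$ shows it is u.c.p.; a direct computation using $r^{-1}b=\alpha_{r^{-1}}(b)r^{-1}$ and centrality of $T(r)$ shows it is an $\A$-module map. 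Extend $\G$-equivariantly along the orbit $\mathcal O=\{tF^{T}\}_{t\in\G}$ by $\psi_{tF^{T}}(m):=t\cdot\psi_{F^{T}}(t^{-1}\cdot m)$ (unambiguous because of the trivial stabilizer), and set $\psi_{F}:=0$ for $F\notin\mathcal O$. The orbit subfamily is a $\G$-family of c.c.p. $\A$-module maps (the compatibility $\beta_{t}(m\cdot b)=\beta_{t}(m)\cdot\alpha_{t}(b)$ of the module with the $\G$-action makes the module property survive conjugation by $\alpha_{t}$), the zero subfamily is a trivial $\G$-family, and their union is admissible by rule $(iv)$ of Definition~\ref{def-admissible}.

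The approximation is then a single calculation: expanding $\varphi_{F^{T}}(ag)=\sum_{t\in F^{T}\cap g^{-1}F^{T}}\alpha_{(gt)^{-1}}(a)\otimes e_{gt,t}$ and applying $\psi_{F^{T}}$ term by term, using $(gt)\alpha_{(gt)^{-1}}(a)=a(gt)$ and centrality of $T$, gives
\[
\psi_{F^{T}}\varphi_{F^{T}}(ag)=a\Bigl(\sum_{t}T(gt)\alpha_{g}(T(t))\Bigr)g.
\]
Writing $1=\sum_{t}\alpha_{g}(T(t))^{2}$ and applying the Cauchy--Schwarz inequality bounds the error by $\|\sum_{t}(T(gt)-\alpha_{g}(T(t)))^{2}\|^{1/2}$, which vanishes in the limit by Definition~\ref{am}. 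After approximating each $x\in S$ by a finite sum in $C_{c}(\G,\A)$ and then choosing $i$ large enough, the estimate $\|\psi_{F^{T}}\varphi_{F^{T}}(x)-x\|\leq\varepsilon$ on $S$ follows. The main technical obstacle is the admissibility bookkeeping --- specifically, the slight perturbation needed to enforce triviality of the stabilizer of $F^{T}$ must preserve both the c.c.p. relation $V^{*}V=1$ and the quantitative amenability estimate, and one has to check that the zero extension on the remaining orbits really dovetails with the orbit $\G$-family to produce an admissible family in the precise sense of Definition~\ref{def-admissible}.
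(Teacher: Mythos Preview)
Your argument is correct and follows the same overall architecture as the paper: the forward map is the regular representation $\varphi=\pi\rtimes U$ into $\A\otimes\mathbb B(\ell^2(\G^\infty))$, and the approximation is the standard Brown--Ozawa estimate coming from the amenability net $\{T_i\}$. The one organizational difference is in how the backward family is built. The paper defines a \emph{single} $\G$-family $\{\psi_F\}_{F\ssubset\G^\infty}$ of $\A$-module maps, $\psi_F(a\otimes e_{s,t})=\alpha_{\bar s}(a)\bar s\bar t^{-1}$, valid for \emph{every} finite $F$ and manifestly equivariant under $F\mapsto rF$; the amenability data enters only afterwards, via compression by $X_i=\sum_{t\in F_i}\alpha_{t^{-1}}(T_i(t))\otimes e_{t,t}$ (rule~(ii)(1) of Definition~\ref{def-admissible}) and a normalizing factor $\tfrac{1}{|F_i|}$. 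You instead fold $T_i$ directly into the backward map via $V=(s^{-1}T(s))_s$, which makes the map u.c.p.\ on the nose and the Cauchy--Schwarz estimate immediate, at the cost of having $\psi$ defined only on a single orbit and having to zero-extend. Both are legitimate; the paper's version avoids the stabilizer bookkeeping entirely (since $\{\psi_F\}$ is globally defined), while yours is more self-contained and does not need the normalization or the citation of \cite[Lemmas 4.3.2--4.3.3]{BO}. The stabilizer worry you flag is harmless: adjoin to $F^T$ a single point $p\in\G^n$ with $n$ large enough that $\G^n\cap F^T=\emptyset$, and set $T(p)=0$; freeness of the left $\G$-action on each $\G^n$ forces the stabilizer of the enlarged set to be trivial, while $V^*V=1$ and the amenability estimate are unaffected.
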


\begin{proof}
	Let $\A$ be represented faithfully in $\CH$, and $\varphi$ be the regular representation of $\A\rtimes_{r} \G$ in $\CH\otimes \ell^2(\G)$, which is defined on $C_c(\G,\A)$ by, 	$$\varphi\big(\sum_{s\in\G} a_ss\big)=\sum_{s,t\in \G}\alpha_{t^{-1}}(a_s)\otimes e_{t,s^{-1}t},$$
	with only finitely many $a_s$ being nonzero. For $F\ssubset \G^\infty$, consider the map $\psi_F:\mathbb M_F(\A) \rightarrow C_c(\G, \A)$, defined by,
	$$\psi_F(\sum_{s,t\in F}a_{s,t}\otimes e_{s,t})=\sum_{s,t\in F}\alpha_{\bar s}(a_{s,t}){\bar s}{\bar t}^{-1},$$
Then $\varphi$ extends to a $*$-homomorphism on $\A\rtimes_{r}\G$, and it follows, by an argument as in the proof of \cite[Lemma 4.2.3]{BO}, that $\psi_F$ is a c.c.p. map.
	
	Next, let us observe that these are  $\A$-module maps. For  $a,b \in \A$, and $s,t\in \G$,
	\begin{align*}
		\varphi(as\cdot b)&=\varphi(a\alpha_s(b)s)=\sum_{t\in \G} \alpha_{t^{-1}}(a\alpha_s(b))\otimes e_{t,s^{-1}t}\\
		&=\sum_{t\in \G} \alpha_{t^{-1}}(a)\alpha_{t^{-1}s}(b)\otimes e_{t,s^{-1}t}
		=\varphi(as)\cdot b.
	\end{align*}
	Similarly, for $a,b \in \A$, and $s,t\in F$,
	\begin{align*}
		\psi_F((a\otimes e_{s,t})\cdot b)&=\psi_F(a\alpha_{\bar t^{-1}}(b)\otimes e_{s,t})=\alpha_{\bar s}(a)\alpha_{\bar s\bar t^{-1}}(b)\bar s\bar t^{-1}=\psi_F(a\otimes e_{s,t})\cdot b.
	\end{align*}
	Next, let us observe that $\varphi$ is a $\G$-map and $\{\psi_F\}$ is a $\G$-family.
	For  $a \in \A$, and $r, s,t\in \G$,
	\begin{align*}
		\varphi(r\cdot as)&=\varphi(\alpha_r(a)rsr^{-1})=\sum_{t\in \Gamma} \alpha_{t^{-1}}(\alpha_r(a))\otimes e_{t,rs^{-1}r^{-1}t}\\
		&=\sum_{t\in \Gamma} \alpha_{t^{-1}}(a)\otimes e_{rt,rs^{-1}t}=\sum_{t\in \Gamma} r\cdot(\alpha_{t^{-1}}(a)\otimes e_{t,s^{-1}t})=r\cdot\varphi(as).
	\end{align*}
	For  $a \in \A$, and $r\in\G$, and $s,t\in F$,
	\begin{align*}
		\psi_{rF}(r\cdot (a\otimes e_{s,t}))&=\psi_{rF}(a\otimes e_{rs,rt})=\alpha_{r\bar s}(a)r\bar s\bar t^{-1}r^{-1}\\&=r\cdot(\alpha_{\bar s}(a)\bar s\bar t^{-1})=r\cdot\psi_F(a\otimes e_{s,t}).
	\end{align*}
	Let $F_i$ be the support of $T_i$, for $i\geq 1$. Let $P_{F_i}$ be the finite rank projection onto the span of $\{\delta_g:g\in F_i\}$, then $1\otimes P_{F_i}$  is in $\A\otimes \mathbb B(\ell^2(\Gamma))$. For the c.c.p admissible maps $\varphi_i= (1\otimes P_{F_i})\circ \varphi\circ (1\otimes P_{F_i})$, we have $\varphi_i(\A\rtimes_{r} \G) \subseteq \M_{F_i}(\A)$, for each $i$.

	For the selfadjoint element,
	$$X_i=\sum_{t\in F_i} \alpha_{t^{-1}}(T_i(t))\otimes e_{t,t},$$ and  the c.p. compression $\theta_i$ by $X_i$, put  $\psi_i:=\frac{1}{|F_i|}\psi_{F_i}\circ\theta_i: \M_{F_i}(\A)\to\A\rtimes_r\G$, it follows from \cite[Lemma 4.3.2]{BO} and \cite[Lemma 4.3.3]{BO} that,
	$$\|\psi_i\circ \varphi_i(x)-x\|\rightarrow 0,$$
	for each $x\in \A\rtimes_{r} \G$. Therefore, $\A\rtimes_{r} \G$ is $\A$-$\G$-nuclear as an $\A$-$\G$-module.
\end{proof}

Next let us treat the case of coactions. The reduced crossed product $\A\rtimes_{r} \G$ also carries a  canonical reduced $\G$-coaction, given  by
$$\delta(as):=as\otimes \lambda_s \ \ (a\in \A, s\in\G).$$
Also $\G$ naturally coacts on $\mathbb M_F(\A)$ by
$$\delta_F(a\otimes e_{s,t}):=(a\otimes \lambda_{st^{-1}})\otimes e_{s,t}\ \ (a\in \A, s,t\in F),$$
where we have canonically identified $\mathbb M_F(\A)\otimes C^*_r(\G)$ with $\mathbb M_F\big(\A\otimes C^*_r(\G)\big)$. Note that both coactions restricted to $\A$, identified respectively with $\{a1: a\in\A\}$ and $\mathbb M_{\{1\}}(\A)$ boil down to a trivial restricted coaction,
$$\delta_\A: \A\to \A\otimes C^*_r(\G); \ \ a\mapsto a\otimes \lambda_1 \ \ (a\in\A),$$
which means that we are not pre-assuming any $\G$-comudule structure on $\A$. The same holds for full coactions, and there are canonical full $\G$-coaction on these two algebras, given the same, but  with $\lambda_s$ replaced by $\omega_s$, where  $\lambda: \G\to \mathcal U(\ell^2(\G))$ is the left regular representation and  $\omega: \G\to\mathcal U(\mathcal H_u); \ \omega:=\oplus_{\sigma\in\hat\G}\sigma$, is the universal representation on the universal Hilbert space $\mathcal H_u:=\bigoplus_{\sigma\in\hat \G} \mathcal H_\sigma$.

Both in the full and reduced cases, these extend to  well defined $\G$-coactions on $\A\rtimes_r\G$ and $\M_F(\A)$, turning them into operator $\A$-$\G$-comodules. For instance in the reduced case, after natural identifications, for $r,s,t\in\G$ and $a,b\in \A$,
\begin{align*}
	\delta(as\cdot b)&=\delta(a\alpha_s(b)s)=a\alpha_{s}(b)s\otimes\lambda_s=(as\cdot b)\otimes \lambda_s\\&=(as\otimes \lambda_s)\cdot(b\otimes\lambda_1)=\delta(as)\cdot \delta_\A(b),
\end{align*}
and
\begin{align*}
	\delta_F\big((a\otimes e_{s,t})\cdot b\big)&=\delta_F\big(a\alpha_{t^{-1}}(b)\otimes e_{s,t}\big)=(a\alpha_{t^{-1}}(b)\otimes\lambda_{st^{-1}})\otimes e_{s,t}\\&=[(a\otimes\lambda_{st^{-1}})\cdot(b\otimes\lambda_1)]\otimes e_{s,t}=[(a\otimes\lambda_{st^{-1}})\otimes e_{s,t}]\cdot(b\otimes\lambda_1)\\&=\delta_F(a\otimes e_{s,t})\cdot\delta_\A(b).
\end{align*}
These maps are also homomorphism: for $r,s,t\in\G$ and $a,b\in \A$,
\begin{align*}
	\delta(asbt)&=\delta(a\alpha_s(b)st)=a\alpha_{s}(b)st\otimes\lambda_{st}=(as\otimes \lambda_s)(bt\otimes\lambda_t)\\&=\delta(as)\delta(bt),
\end{align*}
also,
\begin{align*}
	\delta_F\big((a\otimes e_{s,t})(b\otimes e_{u,v})\big)&=\delta_F(ab\otimes e_{s,v})=(ab\otimes\lambda_{sv^{-1}})\otimes e_{s,v}\\&=[(a\otimes\lambda_{st^{-1}})\otimes e_{s,t}][(b\otimes\lambda_{uv^{-1}})\otimes e_{u,v}]\\&=\delta_F(a\otimes e_{s,t})\delta_F(b\otimes e_{u,v}),
\end{align*}
when $t=u$, and all the terms are zero, otherwise.

Now we are ready to prove the same result for the comodule structure.

\begin{thm}\label{thm:coact-am-a-nu}
	If the action $\alpha$ is amenable, then the reduced C$^*$-crossed product $\A\rtimes_{r} \G$ is $\A$-$\G$-nuclear as an $\A$-$\G$-comodule.
\end{thm}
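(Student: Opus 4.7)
The plan is to mirror the proof of Theorem \ref{thm:act-am-a-nu}, using the same regular representation $\varphi$ and the same family of c.c.p. maps $\{\psi_F\}_{F\ssubset\G^\infty}$, and replacing every $\G$-equivariance verification by the corresponding $\G$-coaction verification. The $\A$-module compatibility of $\varphi$ and $\psi_F$, as well as the norm-approximation estimates, are already established in the module case and carry over verbatim, so the new content of the proof reduces to two coaction checks and a routine verification that the maps involved lie in the coadmissible class of Definition \ref{def-admissible}.

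First I would check that $\varphi$ is a $\G$-comap. For $a\in\A$, $s\in\G$, the element $\varphi(as)=\sum_{t\in\G}\alpha_{t^{-1}}(a)\otimes e_{t,s^{-1}t}$ sits in $\A\otimes\mathbb B(\ell^2(\G^\infty))$, on which the reduced $\G$-coaction sends $a\otimes e_{u,v}$ to $(a\otimes\lambda_{\bar u\bar v^{-1}})\otimes e_{u,v}$. Since $\bar t\,(\overline{s^{-1}t})^{-1}=s$ for every $t\in\G$ viewed as a length-one tuple, the coaction applied to $\varphi(as)$ yields $\varphi(as)\otimes\lambda_s$, which equals $(\varphi\otimes{\rm id})\delta(as)$. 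Second, I would show that $\{\psi_F\}$ is a $\G$-cofamily with trivial $\delta={\rm id}_I$, by verifying that each $\psi_F$ is itself a $\G$-comap; a direct computation shows that $\delta(\psi_F(a\otimes e_{s,t}))$ and $(\psi_F\otimes{\rm id})\delta_F(a\otimes e_{s,t})$ both equal $\alpha_{\bar s}(a)\bar s\bar t^{-1}\otimes\lambda_{\bar s\bar t^{-1}}$, once the canonical identification $\M_F(\A)\otimes C^*_r(\G)\cong\M_F(\A\otimes C^*_r(\G))$ is applied on the right-hand side.

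With these two compatibility statements in place, the remainder of the argument is lifted unchanged from the proof of Theorem \ref{thm:act-am-a-nu}. I let $F_i$ be the support of the amenability net $T_i$ of Definition \ref{am}, form the compressions $\varphi_i=(1\otimes P_{F_i})\circ\varphi\circ(1\otimes P_{F_i})$ and $\psi_i:=\frac{1}{|F_i|}\psi_{F_i}\circ\theta_i$, where $\theta_i$ is the c.p.\ compression by the positive element $X_i=\sum_{t\in F_i}\alpha_{t^{-1}}(T_i(t))\otimes e_{t,t}$. These maps are coadmissible: the singleton $\{\varphi\}$ is coadmissible by rule $(i)'$ of Definition \ref{def-admissible}, compression by a fixed element is coadmissible by rule $(ii)(1)$, and the composition defining $\psi_i$ is coadmissible by rule $(iii)$. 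The required norm convergence $\|\psi_i\circ\varphi_i(x)-x\|\to 0$ for $x\in\A\rtimes_r\G$ is a purely metric statement that does not interact with the coaction structure and is imported verbatim from \cite[Lemma 4.3.2, Lemma 4.3.3]{BO}, exactly as in the module case.

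The main point requiring care, and the place where I expect the only genuine obstacle, is the consistent use of the identification $\M_F(\A)\otimes C^*_r(\G)\cong\M_F(\A\otimes C^*_r(\G))$ when matching the two sides of the comap equation for $\psi_F$, together with the check that the natural coaction on $\A\otimes\mathbb B(\ell^2(\G^\infty))$ restricts to $\delta_F$ on each $\M_F(\A)$. Aside from this bookkeeping, no idea beyond the module-case argument is required, and the theorem follows.
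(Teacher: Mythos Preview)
Your proposal is correct and follows essentially the same route as the paper: the paper likewise reuses the maps $\varphi$ and $\psi_F$ from Theorem \ref{thm:act-am-a-nu}, checks the comap identity for the compressed maps $\varphi_i$ and for $\psi_{F_i}$ via the identification $\M_{F_i}(\A)\otimes C^*_r(\G)\cong\M_{F_i}(\A\otimes C^*_r(\G))$, then notes that compression by $X_i$ is coadmissible and invokes the same convergence from \cite[Lemmas 4.3.2--4.3.3]{BO}. Your choice to verify the comap condition for $\varphi$ itself and for the full family $\{\psi_F\}$ (with trivial $\delta={\rm id}_I$), rather than for the already-compressed $\varphi_i$, is a cosmetic variation that actually matches Definition \ref{e-g-nuc} slightly more directly, but it is not a genuinely different argument.
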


\begin{proof}
	As in the proof of Theorem \ref{thm:act-am-a-nu}, let $F_i$ be the support of $T_i$, for $i\geq 1$, and consider the  c.c.p. module maps $\varphi_i: \A\rtimes_{r} G\rightarrow M_{F_i}(\A)$ and $\psi_i: M_{F_i}(\A)\rightarrow \A\rtimes_{r} G$.
	Let us observe that these are also $\G$-comaps, with respect to the coactions $\delta$ and $\delta_F$ defined above.
	After canonical identifications, for  $a \in \A$ and $s,r\in \G$,  and $i\geq 1$,
	\begin{align*}
		(\varphi_i\otimes{\rm id})\delta(as)&=(\varphi_i\otimes{\rm id})(as\otimes \lambda_s)\\&=\sum_{t\in F_i\cap sF_i} (\alpha_{t^{-1}}(a)\otimes e_{t,s^{-1}t})\otimes \lambda_s\\&=\sum_{t\in F_i\cap sF_i} (\alpha_{t^{-1}}(a)\otimes \lambda_s)\otimes e_{t,s^{-1}t}\\&=\sum_{t\in F_i\cap sF_i} (\alpha_{t^{-1}}(a)\otimes \lambda_{tt^{-1}s})\otimes e_{t,s^{-1}t}\\&=\sum_{t\in F_i\cap sF_i} \delta_F\big(\alpha_{t^{-1}}(a)\otimes e_{t,s^{-1}t}\big)
		=\delta_F\varphi_i(as),
	\end{align*}
	and
	\begin{align*}
		(\psi_i\otimes{\rm id})\delta_F(a\otimes e_{s,t})&=(\psi_i\otimes{\rm id})\big((a\otimes\lambda_{st^{-1}})\otimes e_{s,t}\big)\\&=(\psi_i\otimes{\rm id})\big((a\otimes e_{s,t})\otimes\lambda_{st^{-1}}\big)
		\\&=\frac{1}{|F_i|}(\alpha_s(a)st^{-1})\otimes\lambda_{st^{-1}}\\&=\frac{1}{|F_i|}\delta(\alpha_s(a)st^{-1})=\delta\psi_i(a\otimes e_{s,t}),
	\end{align*}
	where the third (the second) equality of the first (the second) calculation is due to the fact that we have  identified $\mathbb M_{F_i}(\A)\otimes C^*_r(\G)$ canonically with $\mathbb M_{F_i}(\A\otimes C^*_r(\G))$. Now compression by selfadjoint element $X_i:=\sum_{t\in F_i} \alpha_{t^{-1}}(T_i(t))\otimes e_{t,t}$  is a coadmissible c.p map $\theta_i$, and as before,
	$$\psi_i\theta_i \varphi_i(x)-x\rightarrow 0,$$ in norm, as $i\to\infty$,
	for each $x\in \A\rtimes_{r} \G$. Therefore, $\A\rtimes_{r} \G$ is $\A$-$\G$-nuclear as a $\A$-$\G$-comodule.
\end{proof}

Since any action of an amenable group on a unital C$^*$-algebra is amenable, we have the following corollary.

\begin{cor}\label{cor:gr-am-nu}
If $\G$ is amenable, then $\A\rtimes_{r} \G$ is $\A$-$\G$-nuclear both as an $\A$-$\G$-module and an $\A$-$\G$-comodule.
\end{cor}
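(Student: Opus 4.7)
The plan is to reduce the corollary to the two preceding theorems by verifying that amenability of the group $\G$ forces every action $\alpha$ on a unital C$^*$-algebra $\A$ to be amenable in the sense of Definition \ref{am}. Once that reduction is in place, Theorem \ref{thm:act-am-a-nu} and Theorem \ref{thm:coact-am-a-nu} immediately give $\A$-$\G$-nuclearity of $\A\rtimes_r \G$ as an $\A$-$\G$-module and as an $\A$-$\G$-comodule, respectively.

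For the reduction, I would start with a F\o lner net $\{F_i\}_{i\in\BN}$ of nonempty finite subsets of $\G$, so that $|F_i \triangle tF_i|/|F_i|\to 0$ for every $t\in\G$. I would then define $T_i: \G\to Z(\A)^+$ by
$$T_i(s):= |F_i|^{-1/2}\,\chi_{F_i}(s)\cdot 1_\A,$$
which is finitely supported and takes values in the positive cone of the center (since $1_\A\in Z(\A)^+$). A direct computation shows
$$\sum_{s\in\G} T_i(s)^2 = |F_i|^{-1}\sum_{s\in F_i} 1_\A = 1_\A,$$
so the normalization condition in Definition \ref{am} holds.

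For the invariance condition, observe that $\alpha_t$ acts trivially on scalar multiples of $1_\A$, so
$$T_i(s)-\alpha_t\bigl(T_i(t^{-1}s)\bigr)= |F_i|^{-1/2}\bigl(\chi_{F_i}(s)-\chi_{tF_i}(s)\bigr)\cdot 1_\A,$$
and therefore
$$\sum_{s\in\G}\bigl(T_i(s)-\alpha_t(T_i(t^{-1}s))\bigr)^{\!*}\bigl(T_i(s)-\alpha_t(T_i(t^{-1}s))\bigr) = \frac{|F_i \triangle tF_i|}{|F_i|}\cdot 1_\A,$$
whose norm tends to zero by the F\o lner property. Hence $\alpha$ is amenable.

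The argument is essentially a one-line derivation once the correct $T_i$ is written down; the only subtlety is remembering that the definition requires values in $Z(\A)^+$ rather than merely in $\A^+$, which is what forces the choice of scalar multiples of the unit. With amenability of $\alpha$ established, Theorems \ref{thm:act-am-a-nu} and \ref{thm:coact-am-a-nu} apply verbatim and yield both conclusions of the corollary.
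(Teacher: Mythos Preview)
Your proposal is correct and follows exactly the paper's approach: the paper simply asserts that any action of an amenable group on a unital C$^*$-algebra is amenable and then invokes Theorems~\ref{thm:act-am-a-nu} and~\ref{thm:coact-am-a-nu}, while you spell out the standard F{\o}lner argument (with $T_i(s)=|F_i|^{-1/2}\chi_{F_i}(s)\,1_\A$) that justifies that assertion. There is no gap and no difference in strategy.
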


From now on, we call $\A\rtimes_{r} \G$ is \emph{$\A$-$\G$-nuclear} if $\A\rtimes_{r} \G$ is $\A$-$\G$-nuclear as an $\A$-$\G$-module or as an $\A$-$\G$-comodule.

\begin{lem}\label{lem:con-exp-tra}
Let $\rho$ be an $\alpha$-invariant state on $\A$ and $\rho'=\rho\circ \mathbb E$. For every $x\in \A\rtimes_{r} \G$ and $t\in \G$, we
have $\rho'(xt)=\rho'(tx)$.
\end{lem}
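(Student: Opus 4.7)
The plan is to verify the identity first on the dense subalgebra $C_c(\G,\A) \subseteq \A\rtimes_r\G$ and then extend by continuity, since both $\mathbb E$ and $\rho$ are bounded linear maps. Thus I would fix $t\in\G$, take a typical element $x=\sum_{s\in\G} a_s s\in C_c(\G,\A)$, and carry out the computation directly on such $x$.

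The core calculation is to expand $xt$ and $tx$ inside the crossed product using the commutation rule $s\cdot b = \alpha_s(b)\cdot s$. Identifying $t$ with $1\cdot t$, I would obtain
\begin{align*}
xt &= \sum_{s\in\G} a_s\,(st), \\
tx &= \sum_{s\in\G} \alpha_t(a_s)\,(ts).
\end{align*}
Applying the conditional expectation $\mathbb E$, which sends $\sum_u b_u u$ to $b_e$, the only surviving coefficient in $\mathbb E(xt)$ comes from $s=t^{-1}$, giving $\mathbb E(xt)=a_{t^{-1}}$, and likewise $\mathbb E(tx)=\alpha_t(a_{t^{-1}})$.

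Now I apply $\rho$. By hypothesis, $\rho$ is $\alpha$-invariant, i.e.\ $\rho\circ\alpha_t=\rho$ for every $t\in\G$. Hence
\begin{equation*}
\rho'(xt)=\rho(a_{t^{-1}})=\rho(\alpha_t(a_{t^{-1}}))=\rho'(tx),
\end{equation*}
establishing the equality on the dense subalgebra. Finally, since $\mathbb E$ is contractive and $\rho$ is a state, $\rho'$ is continuous, and left/right multiplication by $t$ are bounded on $\A\rtimes_r\G$; therefore both sides of the desired equality are continuous functions of $x$, and the identity extends from $C_c(\G,\A)$ to all of $\A\rtimes_r\G$. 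There is no real obstacle here: the only substantive ingredient beyond routine bookkeeping is the $\alpha$-invariance of $\rho$, which is precisely what is needed to absorb the twist $\alpha_t$ introduced by moving $t$ past $x$.
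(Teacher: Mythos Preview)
Your proof is correct and follows essentially the same route as the paper: verify the identity on $C_c(\G,\A)$ by computing $\mathbb E(xt)=a_{t^{-1}}$ and $\mathbb E(tx)=\alpha_t(a_{t^{-1}})$, invoke $\alpha$-invariance of $\rho$, and extend by density. Your write-up is in fact slightly more explicit about the continuity needed for the density argument, but there is no substantive difference.
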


\begin{proof}
For  $x:=\sum\limits_{s\in \G} a_{s}s\in C_c(\G,\A)$ and $t \in \G$, we have,
\begin{align*}
\rho'(xt)
&=\rho'((\sum\limits_{s\in G} a_{s}s)t)
=\rho( a_{t^{-1}})=\rho(\alpha_{t}(a_{t^{-1}}))\\
&=\rho'(\sum\limits_{s\in G} \alpha_{t}(a_{s})ts)=\rho'(tx).
\end{align*}
By a density argument, the same holds for all $x\in \A\rtimes_{r} \G$.
\end{proof}

\begin{thm}\label{thm:al-nu-am}
Suppose that $\A$ is nuclear and has an $\alpha$-invariant state $\rho$. If the reduced crossed product $\A\rtimes_{r} \G$ is $\A$-$\G$-nuclear, then $\G$ is amenable.
\end{thm}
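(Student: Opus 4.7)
The plan is to reduce to the classical theorem of Lance that, for discrete $\G$, $C^*_r(\G)$ is nuclear if and only if $\G$ is amenable. The reduction proceeds by first promoting the $\A$-$\G$-nuclearity of $\A\rtimes_r\G$ to ordinary nuclearity, and then pushing nuclearity down to $C^*_r(\G)$ through a c.c.p. conditional expectation built from $\rho$.

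First, I would combine the $\A$-$\G$-nuclearity hypothesis with nuclearity of $\A$ to conclude that $\A\rtimes_r\G$ is a nuclear C$^*$-algebra in the usual sense. Applying Definition \ref{e-g-nuc} with $\CE=\A$ and retaining only the c.c.p. character of the maps, one gets, for each $\varepsilon>0$ and finite $S\ssubset\A\rtimes_r\G$, c.c.p. maps $\varphi_F\colon\A\rtimes_r\G\to\M_F(\A)$ and $\psi_F\colon\M_F(\A)\to\A\rtimes_r\G$ with $\|\psi_F\varphi_F(x)-x\|\leq\varepsilon$ on $S$. Since $\A$ is nuclear and $\M_F(\A)\cong\A\otimes\M_{|F|}(\BC)$, the algebra $\M_F(\A)$ is itself nuclear, so its identity further c.c.p.-factors through some $\M_k(\BC)$ up to $\varepsilon$ on $\varphi_F(S)$; composing yields an approximate c.c.p. factorization of $\mathrm{id}_{\A\rtimes_r\G}$ through $\M_k(\BC)$, establishing nuclearity of $\A\rtimes_r\G$.

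Second, I would use the $\alpha$-invariant state $\rho$ to construct a c.c.p. conditional expectation $E\colon\A\rtimes_r\G\to C^*_r(\G)$ determined on generators by $E(as)=\rho(a)\lambda_s$. Concretely, let $(\pi_\rho,\CH_\rho,\Omega_\rho)$ be the GNS triple of $\rho$; the $\alpha$-invariance provides a unitary representation $V\colon\G\to\mathcal{U}(\CH_\rho)$ with $V_s\Omega_\rho=\Omega_\rho$ and $V_s\pi_\rho(a)V_s^*=\pi_\rho(\alpha_s(a))$. Combining $(\pi_\rho,V)$ with the left regular representation of $\G$ yields a $*$-representation $\tilde\pi$ of $\A\rtimes_r\G$ on $\CH_\rho\otimes\ell^2(\G)$, and compression by the isometry $W\colon\ell^2(\G)\to\CH_\rho\otimes\ell^2(\G)$, $\delta_s\mapsto\Omega_\rho\otimes\delta_s$, defines $E(x):=W^*\tilde\pi(x)W$, which is automatically c.c.p. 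A direct matrix-coefficient calculation using $\rho\circ\alpha_{t^{-1}}=\rho$ then shows $E(as)=\rho(a)\lambda_s\in C^*_r(\G)$ and $E|_{C^*_r(\G)}=\mathrm{id}$.

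Third, since $C^*_r(\G)$ is a c.c.p. retract of the nuclear algebra $\A\rtimes_r\G$ through the inclusion $\iota\colon C^*_r(\G)\hookrightarrow\A\rtimes_r\G$ and the projection $E$, its identity is the pointwise limit of compositions $E\circ\psi_n\circ\varphi_n\circ\iota$ of c.c.p. maps factoring through matrix algebras; hence $C^*_r(\G)$ is nuclear. Invoking Lance's theorem (see, e.g., \cite[Theorem 2.6.8]{BO}) then yields that $\G$ is amenable. I expect the main obstacle to be Step 2, namely verifying that the naive formula $E(as)=\rho(a)\lambda_s$ extends to a well-defined c.c.p. map on the entire crossed product; the Stinespring-style construction above automatically secures c.c.p.-ness, and it is precisely $\alpha$-invariance of $\rho$ that ensures the image lands inside $C^*_r(\G)$. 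The remaining ingredients—tensor-product nuclearity, stability of nuclearity under c.c.p. retracts, and Lance's theorem—are standard.
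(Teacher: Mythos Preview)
Your argument is correct, and after the common first step it proceeds along a different line from the paper. Both proofs begin by using the $\A$-$\G$-nuclearity together with nuclearity of $\A$ to deduce that $\A\rtimes_r\G$ is nuclear (this is exactly \cite[Exercise~2.3.11]{BO}, as the paper cites). From there the paper works \emph{internally}: it picks finite-rank u.c.p.\ approximants $\Phi_n$ on $\A\rtimes_r\G$ whose ranges lie in $C_c(\G,\A)$, uses Lemma~\ref{lem:con-exp-tra} (that $\rho':=\rho\circ\mathbb E$ satisfies $\rho'(xt)=\rho'(tx)$) to check that $\omega_n(s):=\rho'(\Phi_n(s)s^{-1})$ is positive definite and finitely supported, and shows $\omega_n\to 1$ pointwise, giving amenability directly via finitely supported positive definite functions. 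Your route instead externalizes the problem: you build the conditional expectation $E\colon\A\rtimes_r\G\to C^*_r(\G)$, $as\mapsto\rho(a)\lambda_s$, via the Stinespring-type compression against $\Omega_\rho\otimes\delta_s$ (with $\alpha$-invariance of $\rho$ providing exactly the fixed vector $V_s\Omega_\rho=\Omega_\rho$ needed for $E|_{C^*_r(\G)}=\mathrm{id}$), then transfer nuclearity along this retract and quote Lance's theorem \cite[Theorem~2.6.8]{BO}. Your approach is more conceptual and avoids Lemma~\ref{lem:con-exp-tra} and the range-in-$C_c(\G,\A)$ maneuver from \cite{MSTT}; the paper's approach is more self-contained in that it exhibits the witnessing positive definite functions explicitly without invoking Lance as a black box. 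One small point worth stating explicitly in your write-up: the covariant pair $(\pi_\rho\otimes 1,\,V\otimes\lambda)$ does extend to the \emph{reduced} crossed product (not merely the full one) by Fell absorption, so $\tilde\pi$ is well defined on $\A\rtimes_r\G$.
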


\begin{proof}
Since $\A$ is nuclear, \cite[Proposition 10.1.7]{BO} shows that $\M_F(\A)$ is nuclear, for each $F\ssubset \G^\infty$. It follows from \cite[Exercie 2.3.11]{BO} that $\A\rtimes_{r} \G$ is nuclear. Hence there exists a net $\{\Phi_n\}_{n\in I}$ of finite rank u.c.p. maps from $\A\rtimes_{r} \G$ to itself converge to the identity map in the point norm topology. By the argument of \cite[Theorem 4.3]{MSTT}, we can assume that the range of $\Phi_n$ is in $C_c(\G, \A)$.
For each $n$, let
\begin{equation*}
\omega_n(s)=\rho'(\Phi_n(s)s^{-1})
\end{equation*}
for all $s \in \G$. For all $s_1,\ldots, s_m\in \G$ and $c_1,\ldots, c_m\in \mathbb C$, the positivity of $\rho'$ yields
\begin{align*}
\sum^m_{i,j=1}c_i\bar{c_j}\omega_n(s^{-1}_js_i)
&=\sum^m_{i,j=1}c_i\bar{c_j}\rho'((\Phi_n(s^{-1}_js_i))(s^{-1}_is_j))\\
&=\sum^m_{i,j=1}\rho'(\bar{c_j}s_j\Phi_n(s_j^{-1}s_i)c_is_i^{-1})\geq 0.
\end{align*}
Hence, $\omega_n$ is positive definite on $\G$. Moreover, we have
\begin{align*}
|\omega_n(s)-1|&=|\rho'(\Phi_n(s)s^{-1})-1|=|\rho'(\Phi_n(s)s^{-1})-\rho'(ss^{-1})|\\
&=|\rho'((\Phi_n(s)-s)s^{-1})|\leq\|\Phi_n(s)-s)\|\rightarrow 0,
\end{align*}
for all $s\in \G$. Since $\Phi_n$ is finite rank, $\omega_n$ has finite support.
\end{proof}

It follows from Corollary \ref{cor:gr-am-nu} and Theorem \ref{thm:al-nu-am} that if $\A$ is nuclear and has an $\alpha$-invariant state $\rho$, then $\G$ is amenable if and only if $\A\rtimes_{r} \G$ is $\A$-$\G$-nuclear. Let $\gamma$ act on $C_b(\G)$ by conjugation, then there exists a $\gamma$-invariant tracial state $\tau_e$ on $C_b(\G)$, such that
$\tau_e(f)=f(e)$,
for all $f\in C_b(\G)$. Hence, we get the following characterization of amenability of $\G$.

\begin{prop}\label{prop:nu-am-eq}
$\G$ is amenable if and only if $C_b(\G)\rtimes_r \G$ is $C_b(\G)$-$\G$-nuclear.
\end{prop}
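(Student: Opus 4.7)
The plan is to read off both implications as immediate consequences of results already established earlier in the paper, specialized to the pair $\A = C_b(\G)$ with the conjugation action $\gamma$ defined by $\gamma_t(f)(s) = f(t^{-1}st)$.

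For the forward direction, the plan is to apply Corollary \ref{cor:gr-am-nu} directly with this choice of $(\A,\gamma)$: once $\G$ is amenable, the corollary asserts that the reduced crossed product $C_b(\G) \rtimes_r \G$ is $C_b(\G)$-$\G$-nuclear, both as a module and as a comodule, which is exactly what is needed.

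For the converse, the plan is to invoke Theorem \ref{thm:al-nu-am} with the same $(\A,\gamma)$. Two hypotheses must be checked. First, $C_b(\G)$ is a commutative C$^*$-algebra (isomorphic to $C(\beta\G)$), and therefore nuclear. Second, we need a $\gamma$-invariant state on $C_b(\G)$, and this is precisely $\tau_e$ defined by $\tau_e(f) = f(e)$, as mentioned in the paragraph preceding the proposition; its $\gamma$-invariance is the one-line observation $\tau_e(\gamma_t(f)) = \gamma_t(f)(e) = f(t^{-1}et) = f(e) = \tau_e(f)$ for all $t\in\G$ and $f\in C_b(\G)$. With both hypotheses in place, Theorem \ref{thm:al-nu-am} applied to the assumed $C_b(\G)$-$\G$-nuclearity of $C_b(\G) \rtimes_r \G$ yields amenability of $\G$.

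There is no substantive obstacle to overcome: the proposition is essentially a bookkeeping statement that isolates the canonical example $\A = C_b(\G)$ of the general equivalence provided by combining Corollary \ref{cor:gr-am-nu} with Theorem \ref{thm:al-nu-am}. The only thing to verify is that this specialization satisfies the hypotheses of the second theorem, which it does for the two elementary reasons above.
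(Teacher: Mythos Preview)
Your proposal is correct and follows exactly the same approach as the paper: the paper's argument, given in the paragraph immediately preceding the proposition, is precisely to specialize the combination of Corollary~\ref{cor:gr-am-nu} and Theorem~\ref{thm:al-nu-am} to $\A=C_b(\G)$ with the conjugation action $\gamma$ and the $\gamma$-invariant state $\tau_e(f)=f(e)$. There is nothing to add.
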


\begin{ex}
(a) Let $\BZ$ act on a unital C$^*$-algebra $\A$ through an action $\alpha$. Since $\BZ$ is amenable, it follows from Corollary \ref{cor:gr-am-nu} that $\A\rtimes_{r} \BZ$ is $\A$-$\BZ$-nuclear.

(b) Let $\partial \mathbb F_2$ be the ideal boundary of $\mathbb F_2$. It is known that $\mathbb F_2$ acts amenably on $C(\partial \mathbb F_2)$ through an action induced by left multiplication (see \cite{BO}).
It follows from Theorems \ref{thm:act-am-a-nu} and \ref{thm:coact-am-a-nu} that $C(\partial \mathbb F_2)\rtimes_{r} \mathbb F_2$ is $C(\partial \mathbb F_2)$-$\mathbb F_2$-nuclear.

(c) It follows from \cite{BHV} that the special linear group $SL_3(\mathbb Z)$ has property $(T)$. Moreover, a discrete group $\G$ is finite if and only if $\G$ is amenable and has property $(T)$. Hence, Proposition \ref{prop:nu-am-eq} shows that $C_b(SL_3(\mathbb Z))\rtimes_{r} SL_3(\mathbb Z)$ is not $C_b(SL_3(\mathbb Z))$-$SL_3(\BZ)$-nuclear.
\end{ex}

In \cite{Haag}, Haagerup introduced the completely bounded approximation property for C$^*$-algebras and an important isomorphism invariant $\Lambda(\A)$ for a C$^*$-algebras $\A$.
\begin{defn}
We say a C$^*$-algebra $\A$ has the completely bounded approximation property (CBAP) if there exist a constant $C>0$ and a net of finite rank c.b. maps $\Phi_i: \A\rightarrow \A$ such that
$$\|\Phi_i(a)-a\|\rightarrow 0$$
for all $a\in \A$ and $\sup \{\|\Phi_i\|_{cb}\}\leq C$.
\end{defn}
The Haagerup constant $\Lambda(\A)$ is the infimum of all $C$ for which such a net $\{\Phi_i\}$ exists. We set $\Lambda(\A)=\infty$ if $\A$ does not have the CBAP.

\begin{lem}\label{lem:app-cbap}
Let $\{\A_i\}_{i\in I}$ be a net of C$^*$-algebras. Assume that $\varphi_i :\A\rightarrow \A_i$ and $\psi_i :\A_i\rightarrow \A$ are c.b. maps such that $\psi_i\circ\varphi_i\rightarrow {\rm id}_\A$ in the point-norm topology, and
$$\sup\|\varphi_i\|_{cb}\leq m_1, \qquad \sup\|\psi_i\|_{cb}\leq m_2.$$ If each $\A_i$ has the CBAP and $\Lambda(\A_i)\leq M$, then $\A$ has the CBAP.
 In fact, $$\Lambda(\A)\leq m_1 m_2 M.$$
\end{lem}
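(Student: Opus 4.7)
The plan is to produce, for any given $\varepsilon>0$, finite subset $F\ssubset \A$, and tolerance $\eta>0$, a finite rank c.b.\ map $\Psi\colon \A\to\A$ with $\|\Psi\|_{cb}\leq m_1m_2(M+\varepsilon)$ and $\|\Psi(a)-a\|<\eta$ for all $a\in F$; then directing over triples $(F,\eta,\varepsilon)$ gives a net witnessing the CBAP with $\Lambda(\A)\leq m_1m_2M$.

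First, given $F$ and $\eta$, I would use $\psi_i\circ\varphi_i\to \mathrm{id}_\A$ in the point-norm topology to choose an index $i$ so that $\|\psi_i\varphi_i(a)-a\|<\eta/2$ for every $a\in F$. Because $\A_i$ has CBAP with $\Lambda(\A_i)\leq M$, for the given $\varepsilon>0$ there is a net of finite rank c.b.\ maps $\Phi^{(i)}_j\colon \A_i\to \A_i$ with $\sup_j\|\Phi^{(i)}_j\|_{cb}\leq M+\varepsilon$ and $\Phi^{(i)}_j\to \mathrm{id}_{\A_i}$ in point-norm. Apply these to the finite set $\varphi_i(F)\ssubset \A_i$ to pick $j$ with $\|\Phi^{(i)}_j(\varphi_i(a))-\varphi_i(a)\|<\eta/(2m_2)$ for all $a\in F$.

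Now set $\Psi:=\psi_i\circ \Phi^{(i)}_j\circ\varphi_i\colon \A\to\A$. Since $\Phi^{(i)}_j$ has finite rank and $\psi_i$, $\varphi_i$ are c.b., $\Psi$ is finite rank and c.b., with
\[
\|\Psi\|_{cb}\leq \|\psi_i\|_{cb}\|\Phi^{(i)}_j\|_{cb}\|\varphi_i\|_{cb}\leq m_1m_2(M+\varepsilon).
\]
For the approximation, write
\[
\Psi(a)-a=\psi_i\bigl(\Phi^{(i)}_j(\varphi_i(a))-\varphi_i(a)\bigr)+\bigl(\psi_i\varphi_i(a)-a\bigr),
\]
so for $a\in F$,
\[
\|\Psi(a)-a\|\leq \|\psi_i\|_{cb}\cdot\tfrac{\eta}{2m_2}+\tfrac{\eta}{2}\leq \eta.
\]

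Finally, directing on the filter of triples $(F,\eta,\varepsilon)$ (ordered by inclusion on $F$ and reverse order on $\eta,\varepsilon$), the collection $\{\Psi_{(F,\eta,\varepsilon)}\}$ forms a net of finite rank c.b.\ maps on $\A$ converging to $\mathrm{id}_\A$ in the point-norm topology. Since $\varepsilon$ is arbitrary, the uniform cb-bound along a cofinal subnet can be taken to be $m_1m_2(M+\varepsilon)$ for any $\varepsilon>0$, yielding $\Lambda(\A)\leq m_1m_2M$. The only genuinely delicate point is the indexing: one must make sure that the diagonal net is actually a net (i.e., the index set is directed) and that the cb-bound does not blow up when $\varepsilon\to 0$, which is handled by taking the infimum over $\varepsilon$ in the definition of $\Lambda(\A)$.
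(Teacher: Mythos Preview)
Your proof is correct and follows essentially the same approach as the paper: choose $i$ so that $\psi_i\circ\varphi_i$ is close to the identity on $F$, then sandwich a finite rank c.b.\ map witnessing the CBAP of $\A_i$ between $\varphi_i$ and $\psi_i$, and estimate exactly as you do. The only difference is that you introduce an auxiliary $\varepsilon$ to allow $\|\Phi^{(i)}_j\|_{cb}\leq M+\varepsilon$ (since $\Lambda(\A_i)$ is an infimum that need not be attained), whereas the paper writes $\|\Phi\|_{cb}\leq M$ directly; your version is slightly more careful but otherwise identical.
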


\begin{proof}
Let $F\ssubset \A$ and $\varepsilon> 0$ be given. There exists $i\in I$ such that
\begin{align*}
\|\psi_i\circ \varphi_i(a)-a\| \leq \frac{\varepsilon}{2},
\end{align*}
for all $a\in F$. By assumption, there exists a c.b. map $\Phi: \A_i\rightarrow \A_i$ such that  $\|\Phi\|_{cb}\leq M$,
and
\begin{align*}
\|\Phi(\varphi_i(a))-\varphi_i(a)\|\leq \frac{\varepsilon}{2 m_2}
\end{align*}
for all $x\in F$. Let
$$\Phi_j=\psi_i\circ \Phi \circ \varphi_i,$$
where $j=(F,\varepsilon)$. Then $\Phi_j$ is a c.b. map on $\A$ with $\|\Phi_j\|_{cb}\leq m_1 m_2 M$.
Moreover, we get
\begin{align*}
\|\Phi_j(a)-a\|&\leq \|\psi_i\circ \Phi \circ \varphi_i(a)-\psi_i\circ \varphi_i(a)\|+\|\psi_i\circ \varphi_i(a)-a\|\\
&\leq m_2\|\Phi(\varphi_i(x))-\varphi_i(x)\|+\frac{\varepsilon}{2}\leq \varepsilon
\end{align*}
for all $a\in F$.
Hence, the net $\{\Phi_j\}_{j\in J}$ indexed by $J=\{(F,\varepsilon)\mid F\ssubset \A, \varepsilon> 0\}$ shows that $\A$ has the CBAP and $\Lambda(\A)\leq m_1 m_2 M$.
\end{proof}

\begin{lem}\label{lem:cr-pr-con-ex}
If $\A\rtimes_{r} \G$ has the CBAP, the $\A$ has the CBAP. In fact, $$\Lambda(\A) \leq \Lambda(\A\rtimes_{r} \G).$$
\end{lem}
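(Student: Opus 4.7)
My plan is to apply the previous lemma (Lemma \ref{lem:app-cbap}) with the ambient algebra $\mathcal{A}_i := \A\rtimes_r \G$ (constant in $i$), using the canonical embedding and conditional expectation as the c.b.\ maps that sandwich $\A$ inside $\A\rtimes_r\G$. Explicitly, I would take $\varphi_i := \iota: \A \hookrightarrow \A\rtimes_r\G$ to be the canonical inclusion and $\psi_i := \mathbb E: \A\rtimes_r\G \to \A$ to be the canonical faithful conditional expectation recalled at the start of Section 3. Both are contractive completely positive maps, so $\|\varphi_i\|_{cb}=\|\psi_i\|_{cb}=1$, and we have the identity $\mathbb E \circ \iota = \mathrm{id}_\A$, which trivially gives $\psi_i\circ\varphi_i \to \mathrm{id}_\A$ in the point-norm topology.

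Then, invoking Lemma \ref{lem:app-cbap} with $m_1=m_2=1$ and $M = \Lambda(\A\rtimes_r\G)$ (which is finite by hypothesis, since each $\mathcal A_i$ equals $\A\rtimes_r\G$ and hence has the CBAP with the same Haagerup constant), we immediately conclude that $\A$ has the CBAP with
\[
\Lambda(\A) \leq m_1\, m_2\, M = \Lambda(\A\rtimes_r\G),
\]
which is exactly the desired inequality.

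There is essentially no obstacle here: the main content is already packaged in Lemma \ref{lem:app-cbap}, and the only thing to verify is that $\iota$ and $\mathbb E$ are completely contractive with $\mathbb E\circ\iota = \mathrm{id}_\A$, together with the finite-rank preservation (if one prefers to reprove it from scratch rather than cite the lemma): given a net $\{\Phi_n\}$ of finite-rank c.b.\ maps on $\A\rtimes_r\G$ with $\sup_n\|\Phi_n\|_{cb}\le \Lambda(\A\rtimes_r\G)+\varepsilon$ converging to the identity point-norm, the maps $\Psi_n := \mathbb E \circ \Phi_n \circ \iota$ are finite-rank (as images of finite-rank maps under composition), satisfy $\|\Psi_n\|_{cb}\leq \|\Phi_n\|_{cb}$, and
\[
\|\Psi_n(a)-a\| = \|\mathbb E(\Phi_n(a)-a)\| \leq \|\Phi_n(a)-a\| \to 0 \quad (a\in\A).
\]
Taking infimum over admissible $\varepsilon>0$ yields $\Lambda(\A)\leq \Lambda(\A\rtimes_r\G)$, completing the proof.
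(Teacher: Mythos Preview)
Your proposal is correct and essentially the same as the paper's proof: the paper simply takes a net $\{\Phi_i\}$ witnessing the CBAP of $\A\rtimes_r\G$ and observes that $\{\mathbb E\circ\Phi_i\}$ (restricted to $\A$) witnesses the CBAP of $\A$, which is exactly your direct argument $\Psi_n=\mathbb E\circ\Phi_n\circ\iota$. Your alternative packaging via Lemma~\ref{lem:app-cbap} is also fine and amounts to the same computation.
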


\begin{proof}
Suppose that there exists a net $\{\Phi_i\}_{i\in I}$ of c.b. maps on $\A\rtimes_{r} \G$ witnessing the CBAP of $\A\rtimes_{r} \G$. Then $\{\mathbb E\circ \Phi_i\}_{i\in I}$ witnesses the CBAP of $\A$. Hence, we have $\Lambda(\A)\leq \Lambda( \A\rtimes_{r} \G).$
\end{proof}

\begin{thm}\label{thm:a-nuc-cbap}
If $\A\rtimes_{r} \G$ is $\A$-$\G$-nuclear and $\A$ has the CBAP, then $\A\rtimes_{r} \G$ has the CBAP. In fact, $$\Lambda(\A\rtimes_{r} \G)=\Lambda(\A).$$
\end{thm}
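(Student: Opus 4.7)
The plan is to combine Lemma \ref{lem:app-cbap} with Lemma \ref{lem:cr-pr-con-ex}, using the $\A$-$\G$-nuclearity hypothesis to produce an approximate factorization of ${\rm id}_{\A\rtimes_r\G}$ through matrix algebras over $\A$, and then transferring CBAP from $\A$ to these matrix algebras.

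First, unfold the $\A$-$\G$-nuclearity hypothesis: given $\varepsilon>0$ and $S\ssubset \A\rtimes_r\G$, Definition \ref{e-g-nuc} (in either the module or comodule variant, per the convention adopted after Corollary \ref{cor:gr-am-nu}) yields a c.c.p.\ (co)admissible map $\varphi:\A\rtimes_r\G\to \A\otimes\mathbb B(\ell^2(\G^\infty))$ and a (co)admissible family $\{\psi_F:\M_F(\A)\to\A\rtimes_r\G\}_{F\ssubset\G^\infty}$ of c.c.p.\ maps such that, setting $\varphi_F:=(1\otimes P_F)\varphi(\cdot)(1\otimes P_F):\A\rtimes_r\G\to\M_F(\A)$, one has $\|\psi_F\varphi_F(b)-b\|\le\varepsilon$ for all $b\in S$. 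This furnishes a net of c.c.p.\ maps, and in particular $\sup\|\varphi_F\|_{cb}\le 1$ and $\sup\|\psi_F\|_{cb}\le 1$.

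Next, I would argue that $\M_F(\A)$ inherits CBAP from $\A$ with the same Haagerup constant. Since $\M_F(\A)\cong \A\otimes \M_{|F|}(\BC)$ as a C$^*$-algebra, if $\{\Phi_j\}$ is a net of finite-rank c.b.\ maps on $\A$ witnessing CBAP with $\sup_j\|\Phi_j\|_{cb}\le \Lambda(\A)+\eta$, then $\{\Phi_j\otimes{\rm id}_{\M_{|F|}}\}$ is a net of finite-rank c.b.\ maps on $\M_F(\A)$ with the same cb norm bound, converging to the identity in point-norm. Hence $\Lambda(\M_F(\A))\le\Lambda(\A)$ for every $F\ssubset\G^\infty$.

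Now I apply Lemma \ref{lem:app-cbap} to the net $\{(\varphi_F,\psi_F)\}$ indexed by $(S,\varepsilon)$, with $m_1=m_2=1$ and $M=\Lambda(\A)$. This yields $\Lambda(\A\rtimes_r\G)\le m_1m_2M=\Lambda(\A)$. Combining with the reverse inequality $\Lambda(\A)\le \Lambda(\A\rtimes_r\G)$ from Lemma \ref{lem:cr-pr-con-ex} gives the desired equality $\Lambda(\A\rtimes_r\G)=\Lambda(\A)$.

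The only delicate point is the first step: one must make sure that the factorization supplied by $\A$-$\G$-nuclearity is genuinely of the form $\psi_F\circ\varphi_F$ with $\varphi_F$ landing in $\M_F(\A)$, which is exactly what Definition \ref{e-g-nuc} provides (by compressing $\varphi$ with $1\otimes P_F$). The module/comodule structure and the (co)admissibility of the maps play no further role here, since CBAP only cares about c.b.\ norms; this makes the argument work uniformly in the module and the comodule case.
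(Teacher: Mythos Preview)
Your proposal is correct and follows essentially the same approach as the paper: use the $\A$-$\G$-nuclearity to factor ${\rm id}_{\A\rtimes_r\G}$ approximately through $\M_F(\A)$ via c.c.p.\ maps, invoke $\Lambda(\M_F(\A))=\Lambda(\A)$, apply Lemma~\ref{lem:app-cbap} with $m_1=m_2=1$, and finish with Lemma~\ref{lem:cr-pr-con-ex}. Your write-up simply spells out in more detail what the paper's three-line proof leaves implicit.
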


\begin{proof}
For each  $F\ssubset \G^\infty$,
$$\Lambda(M_F(\A))=\Lambda(\A).$$
The $\A$-$\G$-nuclearity and Lemma \ref{lem:app-cbap} show that, $$\Lambda(\A\rtimes_{r} \G) \leq \Lambda(\A).$$
On the other hand, it follows from Lemma \ref{lem:cr-pr-con-ex} that $\Lambda(\A) \leq \Lambda( \A\rtimes_{r} \G)$.
Hence, $\A\rtimes_{r} \G$ has the CBAP, and $\Lambda(\A)=\Lambda(\A\rtimes_{r} \G)$.
\end{proof}

Since $\A\rtimes_{r} \G$ is $\A$-$\G$-nuclear whenever the action of $\G$ on $\A$ is amenable, the above theorem is a generalization of \cite[Theorem 3.4]{SiSm} and \cite[Theorem 2.2]{Meng}.
Using a similar argument, we also have the following result, which is a generalization of \cite[Theorem 4.2.6]{BO}.

\begin{thm}\label{thm:a-nuc-cbap}
Suppose that $\A\rtimes_{r} \G$ is $\A$-$\G$-nuclear, then $\A$ is exact if and only if $\A\rtimes_{r} \G$ is exact.
\end{thm}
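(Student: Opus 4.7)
The plan is to mimic the proof template of Theorem \ref{thm:a-nuc-cbap} above, with CBAP replaced by exactness. Three ingredients will be needed: (a) exactness passes to C$^*$-subalgebras, (b) exactness is preserved under minimal tensor products with nuclear C$^*$-algebras, and (c) exactness passes through c.c.p.\ approximations of the identity factoring through exact algebras. Ingredients (a) and (b) are standard permanence results (see \cite[Chapters 2 and 10]{BO}); ingredient (c) will play the role that Lemma \ref{lem:app-cbap} played in the CBAP proof.

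For the (easy) ``only if'' direction, I would use ingredient (a) together with the canonical embedding $\A\hookrightarrow \A\rtimes_r \G$: since $\A$ is a C$^*$-subalgebra of an exact algebra, $\A$ is exact. Note that unlike the CBAP case (Lemma \ref{lem:cr-pr-con-ex}), the conditional expectation $\mathbb E$ is not strictly needed here, because exactness passes to arbitrary C$^*$-subalgebras, not just to range-complemented ones.

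For the ``if'' direction, assume $\A$ is exact. By ingredient (b), $\M_F(\A)\cong \A\otimes\M_F(\mathbb C)$ is exact for every $F\ssubset \G^\infty$, since $\M_F(\mathbb C)$ is finite dimensional and hence nuclear. The $\A$-$\G$-nuclearity of $\A\rtimes_r \G$ then supplies c.c.p.\ maps $\varphi_i:\A\rtimes_r \G\to \M_{F_i}(\A)$ and $\psi_i:\M_{F_i}(\A)\to \A\rtimes_r \G$ with $\psi_i\circ\varphi_i\to\mathrm{id}_{\A\rtimes_r\G}$ in the point-norm topology. Because these maps are automatically contractive, no uniform cb-norm control is required, so ingredient (c) applies without the boundedness bookkeeping that was needed in the proof of Lemma \ref{lem:app-cbap}.

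The main technical step is thus ingredient (c), which is the exactness analogue of Lemma \ref{lem:app-cbap}. One clean route is via Kirchberg's characterization of exactness by nuclear embeddability: fix a faithful representation $\A\rtimes_r \G\hookrightarrow \mathbb B(\mathcal H)$, and for each exact $\M_{F_i}(\A)$, use nuclear embeddability to factor $\mathrm{id}_{\M_{F_i}(\A)}$ approximately as a composition of c.c.p.\ maps through matrix algebras. Composing these factorizations with $\varphi_i$ and $\psi_i$, and then passing to a suitable diagonal subnet, yields a c.c.p.\ factorization of the faithful inclusion $\A\rtimes_r \G\hookrightarrow \mathbb B(\mathcal H)$ through matrix algebras, which witnesses that $\A\rtimes_r \G$ is exact. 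The main obstacle I anticipate is organizing this diagonal argument cleanly; however, the underlying permanence statement is standard and introduces no essentially new idea.
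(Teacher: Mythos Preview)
Your proposal is correct and follows essentially the same route as the paper: the paper's proof is a two-line citation of \cite[Exercise 2.3.12]{BO} (your ingredient (c), the exactness analogue of Lemma \ref{lem:app-cbap}) and \cite[Proposition 10.2.7]{BO} (handling the permanence of exactness for $\M_F(\A)$ and for subalgebras). Your elaboration of ingredient (c) via nuclear embeddability is a standard way to unpack that exercise, so nothing substantively different is happening.
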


\begin{proof}
This follows from \cite[Exercise 2.3.12]{BO} and \cite[Proposition 10.2.7]{BO}.
\end{proof}

\begin{rem}
If $\A\rtimes_{r} \G$ is $\A$-$\G$-nuclear and $\A$ is exact, then it follows from \cite[Proposition 10.2.3]{BO} that the group $\G$ is exact.
\end{rem}

We conclude this article with the following results on property $(T)$. In the case of possibly uncountable group, we also have the corresponding
statements for strong property $(T)$.

\begin{prop}\label{prop:a-nuc-T}
Suppose that $\G$ is countable, $\A$ is nuclear, and there exists an $\alpha$-invariant tracial state on $\A$, then the following statements are equivalent.
\begin{enumerate}[\ \ (1)]
\item $\G$ is finite.

\item $\A\rtimes_{r} \G$ is $\A$-$\G$-nuclear and $(\A\rtimes_{r} \G, C^*_r(\G))$ has (strong) property $(T)$.
\end{enumerate}
\end{prop}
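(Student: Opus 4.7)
The plan is to prove the two implications separately.

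For $(1)\Rightarrow(2)$, if $\G$ is finite then $\G$ is (trivially) amenable, so Corollary \ref{cor:gr-am-nu} immediately gives that $\A\rtimes_{r}\G$ is $\A$-$\G$-nuclear. Since $C^*_r(\G)$ is finite-dimensional in this case, (strong) property $(T)$ for the pair $(\A\rtimes_{r}\G, C^*_r(\G))$ is automatic: any net of almost $C^*_r(\G)$-central vectors in a representation admits nearby exactly central vectors via a standard finite-dimensional averaging/compactness argument against the unit ball of $C^*_r(\G)$.

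For $(2)\Rightarrow(1)$, my strategy is to combine two separate reductions to obtain that $\G$ is both amenable and has (strong) property $(T)$, and then invoke the classical dichotomy. First I would apply Theorem \ref{thm:al-nu-am}: with $\A$ nuclear, an $\alpha$-invariant state (the given tracial one) at hand, and $\A\rtimes_{r}\G$ assumed to be $\A$-$\G$-nuclear, $\G$ is amenable. Next I would extract (strong) property $(T)$ of $\G$ itself from that of the pair. The $\alpha$-invariant tracial state $\rho$ on $\A$ induces a canonical conditional expectation $E_\G:=\rho\otimes{\rm id}:\A\rtimes_{r}\G\to C^*_r(\G)$. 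Given a unitary representation of $\G$ with almost invariant vectors, amenability lets it factor through $C^*_r(\G)$, and composing with $E_\G$ produces a representation of $\A\rtimes_{r}\G$ in which those almost invariant vectors appear as almost $C^*_r(\G)$-central ones; the (strong) property $(T)$ of the pair then yields a nearby exactly invariant vector, so $\G$ has (strong) property $(T)$. Finally, a classical result (cf.\ \cite{BHV}) asserts that any discrete group which is both amenable and has (strong) property $(T)$ must be finite, completing the implication. In the uncountable case the same chain applies verbatim with ``property $(T)$'' replaced throughout by ``strong property $(T)$''.

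The hard step will be making the middle construction in $(2)\Rightarrow(1)$ rigorous: verifying that almost $\G$-invariance in a unitary representation transfers faithfully to almost $C^*_r(\G)$-centrality in the bimodule sense used in the paper's notion of (strong) property $(T)$ for the pair, and that the invariant vector produced by the pair's $(T)$ is in fact $\G$-invariant back on the restricted representation. Once that translation is set up, everything else is a direct assembly of Corollary \ref{cor:gr-am-nu}, Theorem \ref{thm:al-nu-am}, and standard facts from the representation theory of amenable groups.
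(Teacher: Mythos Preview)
Your plan for $(1)\Rightarrow(2)$ matches the paper's: finiteness of $\G$ gives amenability (hence $\A$-$\G$-nuclearity via Corollary~\ref{cor:gr-am-nu}), and finite-dimensionality of $C^*_r(\G)$ forces strong property~$(T)$; the paper simply cites \cite{LN} for the latter step rather than sketching it.

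For $(2)\Rightarrow(1)$ you extract amenability of $\G$ exactly as the paper does, via Theorem~\ref{thm:al-nu-am}. The divergence---and the genuine gap---is in how you pass from property~$(T)$ of the pair $(\A\rtimes_r\G, C^*_r(\G))$ to property~$(T)$ of $\G$. Your proposed mechanism is to compose a representation of $C^*_r(\G)$ with the conditional expectation $E_\G=\rho\otimes{\rm id}:\A\rtimes_r\G\to C^*_r(\G)$ to obtain a representation of $\A\rtimes_r\G$. But $E_\G$ is \emph{not} a $*$-homomorphism unless $\rho$ is a character: for $a,b\in\A$ and $s,t\in\G$ one has $E_\G((as)(bt))=\rho(a\alpha_s(b))\lambda_{st}$, whereas $E_\G(as)E_\G(bt)=\rho(a)\rho(b)\lambda_{st}$, and these agree only when $\rho$ is multiplicative. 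So the composition you describe is merely a c.c.p.\ map, not a representation, and the property~$(T)$ hypothesis on the pair cannot be applied to it. You acknowledge the translation step is the hard one, but the specific route you outline does not work.

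The paper avoids this by invoking \cite[Proposition~3.4]{MN20}, which establishes directly that the existence of an $\alpha$-invariant tracial state together with property~$(T)$ of the pair $(\A\rtimes_r\G, C^*_r(\G))$ forces property~$(T)$ of $\G$; amenability is not used in that step. The actual construction behind that result does not go through $E_\G$ but rather builds a genuine covariant representation of $(\A,\G,\alpha)$ from a given unitary representation of $\G$, using the GNS representation associated to the invariant tracial state on $\A$ (the traciality and invariance are precisely what make the resulting bimodule structure well-behaved and the central-vector bookkeeping go through). If you want to avoid citing \cite{MN20}, that is the construction you would need to carry out in place of the $E_\G$-composition.
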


\begin{proof}
(1)$\Rightarrow$(2) If $\G$ is finite, then $C^*_r(\G)$ has strong property $(T)$ (see \cite{LN}).  Hence, $(\A\rtimes_{r} \G, C^*_r(\G))$ has strong property $(T)$. Moreover, it follows from Corollary \ref{cor:gr-am-nu} that $\A\rtimes_{r} \G$ is $\A$-$\G$-nuclear.

(2)$\Rightarrow$(1) Since there exists an $\alpha$-invariant tracial state on $\A$, it follows from \cite[Proposition 3.4]{MN20} that $\G$ has property $(T)$. Moreover, Theorem \ref{thm:al-nu-am} shows that $\G$ is amenable. Hence, $\G$ is finite.
\end{proof}

Note that if $\G$ is finite, then $C_b(\G)\rtimes_{r} \G$  has strong property $(T)$. Therefore, we have a characterization of finiteness for countable discrete groups.

\begin{cor}\label{cor:a-nuc-T}
Suppose that $\G$ is countable, then $\G$ is finite if and only if $C_b(\G)\rtimes_{r} \G$ is $C_b(\G)$-$\G$-nuclear and has (strong) property $(T)$.
\end{cor}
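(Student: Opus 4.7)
The plan is to deduce this corollary by specializing Proposition \ref{prop:a-nuc-T} to the particular case $\A=C_b(\G)$, equipped with the conjugation action $\gamma_t(f)(s):=f(t^{-1}st)$, so that the bulk of the work is reduced to verifying the hypotheses of that proposition and supplying the finite-case construction for the forward implication.

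For the forward direction, I would argue as follows. If $\G$ is finite, then $\G$ is trivially amenable, and the natural action of $\G$ on $C_b(\G)$ is therefore amenable; Corollary \ref{cor:gr-am-nu} then yields that $C_b(\G)\rtimes_r\G$ is $C_b(\G)$-$\G$-nuclear. Moreover, when $\G$ is finite the algebra $C_b(\G)=\ell^\infty(\G)$ is finite-dimensional, so $C_b(\G)\rtimes_r\G$ is itself finite-dimensional; as observed in the paragraph preceding the corollary, any such crossed product inherits strong property $(T)$ from $C^*_r(\G)$ in the finite setting, which handles the second conjunct.

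For the converse, I would invoke Proposition \ref{prop:a-nuc-T} with $\A:=C_b(\G)$ and $\alpha:=\gamma$. The needed hypotheses are immediate: $C_b(\G)$ is commutative, hence nuclear, and the point-mass functional $\tau_e(f):=f(e)$ is a state satisfying $\tau_e(\gamma_t(f))=f(t^{-1}et)=f(e)=\tau_e(f)$, which is tracial because the algebra is abelian. Thus, provided that the hypothesis \emph{$C_b(\G)\rtimes_r\G$ has (strong) property $(T)$} in the corollary is interpreted as providing (strong) property $(T)$ for the pair $\bigl(C_b(\G)\rtimes_r\G, C^*_r(\G)\bigr)$ (which is automatic, since $C^*_r(\G)$ is canonically a C$^*$-subalgebra of $C_b(\G)\rtimes_r\G$ and property $(T)$ of the ambient algebra passes to any subalgebra that one relativizes against), Proposition \ref{prop:a-nuc-T} forces $\G$ to be finite.

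The only genuine point to spell out is the reduction of the single-algebra version of (strong) property $(T)$ to the pair version used in Proposition \ref{prop:a-nuc-T}; this is a definitional unwinding rather than a substantive obstacle, and is the step where the exact convention of property $(T)$ adopted earlier needs to be matched. Everything else — nuclearity of $C_b(\G)$, traciality and $\gamma$-invariance of $\tau_e$, and the amenability-implies-nuclearity ingredient — has already been recorded in the paper.
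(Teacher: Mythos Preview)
Your proposal is correct and follows essentially the same route the paper takes: the corollary is obtained by specializing Proposition~\ref{prop:a-nuc-T} to $\A=C_b(\G)$ with the conjugation action and the $\gamma$-invariant tracial state $\tau_e$ already introduced before Proposition~\ref{prop:nu-am-eq}, together with the observation (recorded just before the corollary) that $C_b(\G)\rtimes_r\G$ has strong property~$(T)$ when $\G$ is finite. Your added remark that property~$(T)$ of $C_b(\G)\rtimes_r\G$ implies property~$(T)$ of the pair $\bigl(C_b(\G)\rtimes_r\G, C^*_r(\G)\bigr)$ is the one detail the paper leaves implicit, and you have identified it correctly.
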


\section*{Acknowledgement}
The first author was partially supported by Iran Nation Science Foundation (INSF GRant No. 4027151). The second author was supported by the Natural Science Foundation of Shandong Province (No. ZR2020MA008).

\end{document}